\documentclass[10pt]{amsart}


\usepackage{tikz}
\usepackage{mathrsfs}
\usepackage{amsthm}
\usepackage{ulem}
\usepackage{amsmath,amssymb}
\usepackage[all,poly,knot]{xy}
\usepackage{hyperref}
\usepackage[center]{titlesec} 
\usepackage{amsfonts}
\usepackage{color}
\titleformat{\section}{\centering\Large\bfseries}{\arabic{section}.}{1em}{}
\titleformat{\subsection}{\centering\large\bfseries}{\arabic{section}.\arabic{subsection}.}{1em}{}


\newtheorem{thm}{Theorem}[section]

\newtheorem{lem}[thm]{{Lemma}}

\newtheorem{rmk}[thm]{Remark}
\theoremstyle{definition}

\newtheorem{setup}[thm]{Setup}
\newtheorem*{acknowledgement*}{Acknowledgements}

\theoremstyle{remark}
\newtheorem*{rmk*}{Remark}

\newtheorem{defi}[thm]{Definition}
\theoremstyle{definition}

\numberwithin{equation}{section}





\newcommand{\bZ}{{\mathbb Z}}

\newcommand{\nc}{\newcommand}
\nc{\on}{\operatorname}

\nc{\Aut}  {{\on{\mathrm  {Aut}}}}
\nc{\End}  {{\on{\mathrm  {End}}}}
\nc{\Fil}  {{\on{\mathrm  {Fil}}}}
\nc{\Frac} {{\on{\mathrm  {Frac}}}}
\nc{\Gal}  {{\on{\mathrm  {Gal}}}}
\nc{\GL}   {{\on{\mathrm  {GL}}}}
\nc{\Gr}   {{\on{\mathrm  {Gr}}}}
\nc{\Hom}  {{\on{\mathrm  {Hom}}}}
\nc{\id}   {{\on{\mathrm  {id}}}}
\nc{\PGL}  {{\on{\mathrm  {PGL}}}}
\nc{\rank} {{\on{\mathrm  {rank}}}}
\nc{\rmd}  {{\on{\mathrm  {d}}}}
\nc{\Spec} {{\on{\mathrm  {Spec}}}}

\nc{\HDF}  {{\on{\mathcal  {HDF}}}}
\nc{\HIG}  {{\on{\mathcal  {HIG}}}}
\nc{\IC}   {{\on{\mathcal {IC}}}}
\nc{\MCF}  {{\on{\mathcal {MCF}}}}
\nc{\MCFa} {{\on{\mathcal {MCF}_{[0,a]}}}}
\nc{\MF}   {{\on{\mathcal {MF}}}}
\nc{\MFa}  {{\on{         \MF_{[0,a]}}}}
\nc{\MFaf} {{\on{         \MF_{[0,a],f}}}}
\nc{\MIC}  {{\on{\mathcal {MIC}}}}
\nc{\MICa} {{\on{\mathcal {MIC}_{[0,a]}}}}
\nc{\THDF} {{\on{\mathcal {THDF}}}}
\nc{\THDFa}{{\on{\mathcal {THDF}_{[0,a]}}}}
\nc{\TMF}  {{\on{\mathcal {TMF}}}}
\nc{\TMFa} {{\on{\TMF_{[0,a]}}}}
\nc{\TMFaf}{{\on{\TMF_{[0,a],f}}}}
\nc{\tMIC} {{\on{\widetilde{\mathcal{MIC}}}}}
\nc{\FIsoc} {{\on{\textbf{F-Isoc}}}}

\def\nilp{\mathrm{nilp}}


\numberwithin{equation}{thm}


\usepackage[top=1.5in, bottom=1.5in, left=1in, right=1in]{geometry}


\def\Rep{\mathrm{Rep}^{\mathrm{irr.crys}}_{\mathrm{GL}_r(\mathbb Z_{p^f})}(\Gamma)}
\def\RepGeo{\mathrm{Rep}^{\mathrm{irr.crys}}_{\mathrm{GL}_r(\mathbb Z_{p^f})}(\Gamma,\bar \Gamma)}
\def\MF{\mathrm{MF}_{\textbf{FF}}}

\def\MFFil{\mathrm{MF}_{\textbf{FdR}}}
\def\MFnabla{\mathrm{MF}_{\textbf{dR}}}
\def\MFphi{\mathrm{MF}_{\textbf{F-Cris}}}
\def\tMFphi{\mathrm{MF}^{\rm twist}_{\textbf{F-Cris}}}
\def\MFtheta{\mathrm{MF}_{\textbf{Higgs}}}
\def\tMFIsoc{\mathrm{MF}^{\rm twist}_{\textbf{F-Isoc}}}
\def\MFIsoc{\mathrm{MF}_{\textbf{F-isoc}}}




\begin{document}
\title{ Finiteness of logarithmic crystalline representations II} 
\begin{abstract} Let $K$ be an unramified $p$-adic local field and let $W$ be the ring of integers of $K$. Let $(X,S)/W$ be a smooth proper scheme together with a simple normal crossings divisor and fix positive integers $r$ and $f$. We show that the set of absolutely irreducible representations $\pi_1(X_{\bar K})\rightarrow \mathrm{GL}_r(\mathbb{Z}_{p^f})$ that come from log crystalline $\bZ_{p^f}$-local systems over $(X_K,S_K)$ of rank $r$ is finite. The proof uses $p$-adic nonabelian Hodge theory and a finiteness result due Abe/Lafforgue.
\end{abstract}
\author{Raju Krishnamoorthy}
\email{raju@uga.edu}  
\address{Department of Mathematics, University of Georgia, Athens, GA 30605, USA}
\author{Jinbang Yang}
\email{yjb@mail.ustc.edu.cn}
\address{Institut f\"ur Mathematik, Universit\"at Mainz, Mainz 55099, Germany}
\author{Kang Zuo}
\email{zuok@uni-mainz.de}
\address{Institut f\"ur Mathematik, Universit\"at Mainz, Mainz 55099, Germany}

\maketitle
\section{Introduction}
To state our main theorem, the following setup will be convenient.
\begin{setup}\label{setup}Let $r$ and $f$ be positive integers. Let $p$ be an odd prime and let $k$ be a finite field containing $\mathbb F_{p^f}$. Set $W:=W(k)$ to be the ring of Witt vectors of $k$ and $K:=\text{Frac}(W)$. Let $(X,S)/W$ be a smooth projective scheme together with a relative simple normal crossings divisor over $W$. Set $U:=X\backslash S$. Let $x$ be a $\bar K$ point of $U$. For a positive integer $n\geq 1$ and a scheme $T$ over $W$, the notation $T_n$ refers to the reduction of $T$ modulo $p^n$. 
\end{setup}

The following is our main result, in which the base point is suppressed.
\begin{thm}\label{mainThm} Notation as in Setup \ref{setup}. Then the following set
\[\left.\left\{\rho\colon \pi^{et}_1(U_{K}) \rightarrow \mathrm{GL}_r(\mathbb Z_{p^f}) \left| \begin{array}{l}
			\rho \text{ is log crystalline }\\
			\text{with HT weights in }[a,a+p-1]\text{ for some }a\in\mathbb{Z} \\
			\text{and } \rho^{\text{geo}}\colon \pi_1^{\text{\'et}}(U_{\bar K})\rightarrow \mathrm{GL}_r(\mathbb{Q}_{p^f}) \text{ absolutely irreducible.}\\
			\end{array}  \right.\right\}
\right/\begin{array}{l}
			\rho_1\sim\rho_2\text{ if }\\
			\rho_1|_{\pi_1^{\text{\'et}}(U_{\bar K})}\cong \rho_2|_{\pi_1^{\text{\'et}}(U_{\bar K})}\\
			\end{array}
			\]	
is finite.
\end{thm}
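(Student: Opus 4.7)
The strategy is to push each $\rho$ through $p$-adic nonabelian Hodge theory down to the special fiber, then to apply the Abe/Lafforgue finiteness theorem for irreducible overconvergent $F$-isocrystals, and finally to control the lifting step by a rigidity argument.

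First, I would normalise the Hodge-Tate window: twisting $\rho$ by an appropriate power of the cyclotomic character shifts its Hodge-Tate weights, so at the cost of a single Tate twist per representation one reduces to the case of weights in $[0,p-1]$. The log Fontaine-Laffaille / $p$-adic Simpson correspondence, in the form extended to log smooth schemes by Faltings and by Lan-Sheng-Zuo together with the authors, then identifies such $\rho$ with $f$-periodic logarithmic Higgs-de Rham flows on $(X,S)/W$ whose initial Higgs bundle is nilpotent of level at most $p-1$.

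Reducing modulo $p$ yields an $f$-periodic log Higgs-de Rham flow on $(X_1,S_1)/k$, which via the logarithmic inverse Cartier transform corresponds to an $F$-isocrystal on $U_k$ that is overconvergent along $S_k$. Absolute irreducibility of $\rho^{\text{geo}}$ descends, via compatibility of the Tannakian monodromy groups under the correspondence, to absolute irreducibility of this overconvergent $F$-isocrystal. The Abe/Lafforgue theorem then applies: up to twist by rank one overconvergent $F$-isocrystals, there are only finitely many absolutely irreducible overconvergent $F$-isocrystals on $U_k$ of rank $r$. The determinants arising from our construction are themselves log crystalline rank one local systems with bounded Hodge-Tate weights; class field theory for such rank one objects constrains them to finitely many possibilities, confining the resulting set of $F$-isocrystals to a finite set.

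The essential obstacle is the final lifting step: one must show that the assignment $\rho \mapsto \mE_0$ has finite fibers, modulo the geometric equivalence quotient on the left-hand side of the statement of Theorem \ref{mainThm}. The plan is a deformation-theoretic rigidity argument. Fixing an $F$-isocrystal $\mE_0$ on $U_k$, any preimage corresponds to an $f$-periodic lift to $W$ of its associated mod $p$ log Higgs-de Rham flow with prescribed Hodge-Tate window. Obstructions to such lifts live in log de Rham cohomology groups, and the absolute irreducibility of $\rho^{\text{geo}}$ should force the tangent space of the deformation functor to be torsion and hence, after accounting for geometric equivalence, discrete. Making this rigidity statement precise in the logarithmic setting over a possibly ramified base, and verifying that the geometric equivalence quotient really cancels the residual ambiguity, is the hardest part of the argument and where I expect the bulk of the technical work to lie.
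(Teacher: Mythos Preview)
Your overall architecture matches the paper's: normalise Hodge--Tate weights by a Tate twist, pass via Fontaine--Faltings/Higgs--de Rham to an overconvergent $F$-isocrystal on $U_k$, invoke Abe/Lafforgue (in the form of Kedlaya's finiteness of absolutely irreducible $F$-isocrystals up to constant twist), and then argue that the passage back has finite fibres modulo geometric equivalence. The paper also first reduces to the case of curves via a Bertini/Lefschetz argument; you do not mention this, and the semistability arguments the paper uses downstream genuinely rely on $\dim X_1 = 1$.

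Where your proposal diverges substantively is the lifting step, and here there is a real gap. You propose a deformation-theoretic rigidity argument: absolute irreducibility of $\rho^{\mathrm{geo}}$ should force the tangent space of the lifting functor to be torsion. But irreducibility does not by itself make $H^1$ of $\mathrm{End}$ vanish or become torsion, and the space of $f$-periodic lifts of a mod-$p$ Higgs--de Rham flow is not obviously discrete. The paper does \emph{not} attack this via deformation theory. Instead it decomposes the fibre into two independent finiteness problems: first, given the $F$-isocrystal, there are only finitely many \emph{semistable lattices} (logarithmic $F$-crystals in locally free modules) extending it --- this is proved by a delicate infinite-descending-chain argument producing a proper sub-$F$-isocrystal from an infinite family of lattices, contradicting irreducibility; second, given the lattice, there are only finitely many Hodge filtrations mod $p$ (finiteness of $\mathrm{Aut}(V_1)$ over a finite field), and each such filtration lifts essentially uniquely to $W$ by a rigidity result from the authors' earlier work. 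The twist ambiguity is handled not by bounding determinants via class field theory as you suggest (note there are infinitely many unramified rank-one crystalline characters), but by passing to twist-equivalence classes on both the $F$-crystal and $F$-isocrystal sides and checking that the induced map remains finite-to-one via an explicit valuation bound coming from strong divisibility.

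In short: your outline is right up to and including the appeal to Abe/Lafforgue, but your plan for the return trip is too vague to succeed as stated, and the paper's actual mechanism --- separating lattice-finiteness from filtration-finiteness and proving each by hand over the finite residue field --- is both different and where the real content lies.
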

Note that $\rho_1\sim\rho_2$ in Theorem \ref{mainThm} if and only if there exists a character $\chi\colon \text{Gal}(\bar K/K)\rightarrow \mathbb{Z}^{\times}_{p^f}$ such that $\rho_1\cong \rho_2\otimes \chi$ because $\rho_1$ and $\rho_2$ are assumed to be geometrically absolutely irreducible.

Crystalline representations are a $p$-adic analog of polarized variations of Hodge structures. Therefore Theorem \ref{mainThm} is an arithmetic analog of a theorem of Deligne \cite{Del87}. See also the very recent work of Litt for a finiteness result in a different spirit \cite{Li18}.

\section{Preliminaries}
First of all, we reduce Theorem \ref{mainThm} to the case of curves.
\begin{lem}\label{lemma:lefschetz}Notation as in Setup \ref{setup}. Then there exists a smooth projective relative curve $C\subset X$ over $W$ that intersects $S$ transversely, with the property that $\pi_1(C_K\cap U_K)\rightarrow \pi_1(U_K)$ is surjective. Therefore, to prove Theorem \ref{mainThm}, it suffices to consider the case when $X/W$ has relative dimension 1.
\end{lem}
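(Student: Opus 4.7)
The plan is to produce $C$ as a sufficiently general complete intersection of hyperplane sections in a projective embedding of $X/W$, and then invoke a Lefschetz-type theorem for \'etale fundamental groups of quasi-projective varieties to get the required surjectivity on $\pi_1$.

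First I would fix a $W$-embedding $X\hookrightarrow \bP^N_W$ via a very ample line bundle. A relative Bertini-type argument over $W$, choosing hyperplanes generic on both the special and generic fibers, lets one iteratively cut $X$ down to a smooth projective relative curve $C\subset X$ over $W$ meeting each stratum of the SNC divisor $S$ transversely. This simultaneously guarantees that $(C,C\cap S)/W$ is a smooth projective relative curve with a smooth (\'etale-over-$W$) relative boundary divisor, so $C\cap U$ is smooth over $W$ and $C$ provides a good compactification for the log crystalline theory on $C\cap U$.

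Next I would invoke the Lefschetz theorem for \'etale fundamental groups of quasi-projective varieties (as in SGA 2, together with its extensions to the open case) to conclude that, after possibly passing to a more generic choice of hyperplanes, the restriction map $\pi_1^{\text{\'et}}(C_{\bar K}\cap U_{\bar K})\twoheadrightarrow \pi_1^{\text{\'et}}(U_{\bar K})$ is surjective. Using the homotopy exact sequence over $\Spec K$, this upgrades to surjectivity of $\pi_1^{\text{\'et}}(C_K\cap U_K)\twoheadrightarrow \pi_1^{\text{\'et}}(U_K)$ as well.

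For the reduction to curves, given any $\rho$ in the set of Theorem \ref{mainThm}, I would restrict it to $\pi_1^{\text{\'et}}(C_K\cap U_K)$. The restriction remains log crystalline on $(C_K,C_K\cap S_K)$ with Hodge--Tate weights in the same range (functoriality of log crystalline representations under pullback to a smooth log subscheme), its geometric part remains absolutely irreducible (absolute irreducibility is preserved under restriction along any surjection of groups), and two inequivalent $\rho$'s on $U_K$ restrict to two inequivalent representations on $C_K\cap U_K$ (injectivity of restriction on isomorphism classes when the map on $\pi_1$ is surjective). Hence finiteness for $(C,C\cap S)$ implies finiteness for $(X,S)$.

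The main obstacle I anticipate is the open Lefschetz statement: the classical surjectivity theorem for $\pi_1$ is stated for proper varieties, whereas here one needs to control the fundamental group of the open variety $U=X\setminus S$. Arranging for a sufficiently general complete-intersection curve $C$ so that $C\cap U\hookrightarrow U$ induces a surjection on $\pi_1^{\text{\'et}}$ requires the Bertini theorem for fundamental groups in the quasi-projective setting, and the hyperplanes must be chosen so as to both avoid the singular strata of $S$ and remain sufficiently generic for the $\pi_1$-statement to apply.
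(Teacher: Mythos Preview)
Your overall strategy---Bertini to produce the curve, Lefschetz for $\pi_1$, then induction---is the same as the paper's. But there is a genuine gap in the Bertini step, and it is not where you think the obstacle lies.

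The residue field $k$ is \emph{finite}. Classical Bertini fails over finite fields: there is no ``generic hyperplane'' to choose, and for a given very ample $L$ there may be no hyperplane section of $X_1$ that is smooth and transverse to $S_1$. Your phrase ``choosing hyperplanes generic on both the special and generic fibers'' does not make sense on the special fiber. The paper handles this by applying Poonen's Bertini theorem \cite{poonenbertini} to $X_1/k$: for $m\gg 0$ there exists a section $s_1\in H^0(X_1,L_1^m)$ whose zero locus is smooth and meets $S_1$ transversely. One then uses surjectivity of $H^0(X,L^m)\to H^0(X_1,L_1^m)$ (Serre vanishing, again for $m\gg 0$) to lift $s_1$ to $s\in H^0(X,L^m)$; the zero locus $V(s)$ is then smooth over $W$ and transverse to $S$ by deformation from the special fiber. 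This is the actual content of the existence step.

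By contrast, the open Lefschetz statement you flag as the ``main obstacle'' is treated by the paper as well known: once $D_K\subset X_K$ is ample and meets $S_K$ transversely, surjectivity of $\pi_1(D_K\cap U_K)\to\pi_1(U_K)$ is standard. Your reduction-to-curves paragraph is fine and matches the paper's final sentence.
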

\begin{proof}
We claim that there exists a smooth ample relative divisor $D\subset X$ over $W$ that intersects $S$ transversely. Indeed, pick some ample line bundle $L$ on $X$; then for all $m\gg 0$, the map $H^0(X,L^m)\rightarrow H^0(X_1,L^m_1)$ is surjective. On the other hand, for $m\gg 0$, the vector space $ H^0(X_1,L^m_1)$ has a section $s_1$ whose zero locus $V(s_1)$ is smooth and intersects $S_1$ transversely by Poonen's Bertini theorem \cite[Theorem 1.3]{poonenbertini}. Take any lift $s\in H^0(X,L^m)$ of $s_1$; then the zero locus $V(s)$ is smooth over $W$ and intersects $S$ transversely. Finally, it is well known that the map on fundamental groups $\pi_1(D_K\cap U_K)\rightarrow \pi_1(U_K)$ is surjective because $D_K\subset X_K$ is ample and $D_K$ intersects $S_K$ transversely. Proceed by induction. 

Now, as $\pi_1(C_K\cap U_K)\rightarrow \pi_1(U_K)$ is surjective, it follows that to prove Theorem \ref{mainThm}, it suffices to prove it for the pair $(C,S\cap C)$, i.e., we may reduce to the case of curves.
\end{proof}
To a logarithmic crystalline representation $\rho\colon \pi_1(U_K)\rightarrow \textrm{GL}_r(\mathbb{Z}_{p^f})$, we may attach an overconvergent $F$-isocrystal.\footnote{For details, see \cite[Section 2]{KYZfiniteness1}.} We now show that a logarithmic crystalline representation being irreducible implies that the attached overconvergent $F$-isocrystal is also irreducible. While this is not strictly useful for the rest of the article, it seems to be of independent interest.
\begin{lem}\label{irrF-Isoc}
Notation as in Setup \ref{setup}. Let $\rho\colon \pi_1(U_K) \rightarrow \mathrm{GL}_r(\mathbb Z_{p^f})$ be a crystalline representation with associated logarithmic Fontaine-Faltings module $(V,\nabla,\Fil,\varphi,\iota)$. 

If $\rho_{\mathbb Q}$ is irreducible then the overconvergent $F$-isocrystal $(V,\nabla,\varphi,\iota)_{\mathbb Q}$ in $\FIsoc^{\dagger}(U_1)_{\mathbb{Q}_{p^f}}$ is irreducible.
\end{lem}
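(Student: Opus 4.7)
The plan is to argue by contraposition: given a proper nonzero sub-object of the overconvergent $F$-isocrystal $(V,\nabla,\varphi,\iota)_{\mathbb{Q}}$, I will produce a proper nonzero sub-representation of $\rho_{\mathbb Q}$. So suppose that $W\subset V_{\mathbb Q}$ is a sub-module stable under $\nabla$, $\varphi$, and $\iota$. The goal is to equip $W$ with a Hodge filtration that makes it a sub-logarithmic Fontaine-Faltings module of $(V,\nabla,\Fil,\varphi,\iota)_{\mathbb{Q}}$; the equivalence between logarithmic Fontaine-Faltings modules with HT weights in $[a,a+p-1]$ and crystalline $\mathbb{Z}_{p^f}$-local systems, as recalled in \cite[Section 2]{KYZfiniteness1}, then yields a proper nonzero sub-representation of $\rho_{\mathbb Q}$, contradicting irreducibility.

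The natural candidate is the induced filtration
\[\Fil^i W \;:=\; W\cap \Fil^i V_{\mathbb Q}.\]
Griffiths transversality for $\nabla|_W$ with respect to this filtration is immediate from Griffiths transversality on $V_{\mathbb Q}$ together with $\nabla$-stability of $W$, and the $\iota$-structure clearly restricts to $W$. The substantive step is to verify that the Frobenius $\varphi$ restricts to a Frobenius structure on the filtered object $(W,\Fil^\bullet W)$ satisfying the Fontaine-Faltings axioms, and likewise that the quotient $(V_{\mathbb Q}/W,\Fil^\bullet)$ is a Fontaine-Faltings module.

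The main obstacle is exactly this filtration compatibility. It is an instance of the strictness of morphisms in Fontaine-Laffaille type categories: when the Hodge-Tate weights lie in an interval of length at most $p-1$, every morphism in the category of logarithmic Fontaine-Faltings modules is automatically strict with respect to the Hodge filtration, so any $\varphi$-stable sub-object inherits a sub-Fontaine-Faltings structure via intersection with $\Fil^\bullet$, and the quotient inherits one via image of $\Fil^\bullet$. Granting this strictness, both $W$ and $V_{\mathbb Q}/W$ become logarithmic Fontaine-Faltings modules fitting into a short exact sequence in the filtered Frobenius category. Applying the equivalence of categories to the inclusion produces a proper nonzero sub-crystalline representation of $\rho_{\mathbb Q}$, completing the contrapositive.
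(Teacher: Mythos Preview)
Your strategy is the same as the paper's: take a sub-$F$-isocrystal, equip it with the induced filtration, show this is a sub-Fontaine--Faltings module, and apply the Fontaine--Lafaille--Faltings correspondence to obtain a sub-representation. But the sentence ``Granting this strictness\dots'' is precisely where all the work lies, and your justification for it does not quite land.

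Two concrete issues. First, the strictness theorem you invoke concerns morphisms \emph{between} Fontaine--Laffaille modules; here you begin only with a $\varphi$-stable sub-object of $(V,\nabla,\varphi,\iota)_{\mathbb Q}$, with no filtration and hence not yet known to be a Fontaine--Faltings module at all. So there is no morphism in the Fontaine--Laffaille category to which strictness applies; what you actually need is that the induced filtration satisfies strong divisibility, and this must be checked directly. Second, strong divisibility is an integral condition, so you should pass to the lattice $V':=W\cap V$ inside $V$ rather than work in $V_{\mathbb Q}$ throughout (you also need the Fontaine--Lafaille--Faltings functor at the integral level to land in $\rho$ rather than just $\rho_{\mathbb Q}$, though this is a minor point). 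The paper carries out exactly this verification: after taking $V'=\mathcal V'\cap V$ with the induced filtration, it applies Faltings' tilde functor to the short exact sequence $0\to V'\to V\to V''\to 0$, uses that $\varphi\colon \Phi^*\widetilde V\to V$ is an isomorphism and that $\varphi(\Phi^*\widetilde{V'})\subset \mathcal V'\cap V=V'$, and then a snake-lemma/rank argument forces $\varphi'\colon \Phi^*\widetilde{V'}\to V'$ to be an isomorphism, i.e.\ strong divisibility for $V'$.

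One further small omission: the statement concerns the overconvergent $F$-isocrystal on $U_1$, so before running the above argument you should reduce (via \cite[Theorem~6.4.5]{kedlayasemistableI}) to checking irreducibility in $\FIsoc^{\nilp}_{\log}(X_1,S_1)_{\mathbb Q_{p^f}}$, where the lattice $V$ actually lives.
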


\begin{proof} First of all, it follows from \cite[Theorem 6.4.5]{kedlayasemistableI} that it suffices to check that $(V,\nabla,\varphi,\iota)_{\mathbb Q}$ is irreducible in $\FIsoc^{\nilp}_{\log}(X_1,S_1)_{\mathbb Q_{p^f}}$. Our proof will proceed by contradiction. 

Let $\Phi$ be a local lifting of the absolute Frobenius on $(X_1,S_1)$. For the logarithmic Fontaine-Faltings module, locally the $\varphi$-structure can be represented as an isomorphism
\[\varphi\colon \Phi^*\widetilde{(V,\nabla,\Fil)} \xrightarrow{\simeq} (V,\nabla),\]
where $\widetilde{(\cdot)}$ is Faltings' tilde functor. In the case when $V$ is $p$-torsion free, one may describe this as follows: 
\[\widetilde{(V,\nabla,\Fil)} = \sum_i \frac{\Fil^i(V,\nabla)}{p^i} \subset (V,\nabla,\Fil)_{\mathbb Q}.\]
That $\varphi$ is an isomorphism encodes the strong divisibility in the definition of a Fontaine-Faltings module. After shifting the filtration, we assume $\Fil^0V = V$. In this case, $(V,\nabla) \subset \widetilde{(V,\nabla,\Fil)}$. and $\varphi$ can be restricted on $\Phi^* (V,\nabla)$,
\[\varphi\colon \Phi^* (V,\nabla)\rightarrow (V,\nabla).\]
This yields the underlying logarithmic $F$-crystal.

 Suppose the $F$-isocrystal $(V,\nabla,\varphi,\iota)_{\mathbb Q}$ is not irreducible in $\FIsoc_{\log}^{\nilp}(X,S)_{\mathbb{Q}_{p^f}}$. Let $(\mathcal V',\nabla',\varphi',\iota')$ be a proper sub $F$-isocrystal of $(V,\nabla,\varphi,\iota)_{\mathbb Q}$ in $\FIsoc_{\log}^{\nilp}(X,S)_{\mathbb{Q}_{p^f}}$.  In particular, $\mathcal V'$ is stable under the $\mathbb{Q}_{p^f}$-action, the connection $\nabla$, and $\varphi$.
There is a natural choice of lattice: 
\[V' = \mathcal V'\cap V\]
Then the restriction of $\varphi$ induces a map
\begin{equation}\label{varphi'}
 \varphi'\colon \Phi^*(V',\nabla') \rightarrow (V',\nabla'),
\end{equation} 
since $\varphi(\Phi^*((V',\nabla'))) \subset \varphi(\Phi^*((\mathcal V',\nabla')) \cap \varphi(\Phi^*((V,\nabla))) \subset (\mathcal V',\nabla') \cap (V,\nabla) = (V',\nabla')$.
Denote $\Fil'$ the restriction of $\Fil$ on $V'$. The endomorphism structure clearly restricts on the quadruple $(V',\nabla',\Fil',\varphi')$. In the following, we show that $(V',\nabla',\Fil',\varphi')$ forms a sub-Fontaine-Faltings module of $(V,\nabla,\Fil,\varphi)$. As the triple $(V',\nabla', \varphi')$ is a logarithmic $F$-crystal in finite, locally free modules, we must check that the pair $(\Fil',\varphi')$ is strongly divisible, i.e., that the isogeny $\varphi\colon \Phi^*(V',\nabla')\rightarrow (V',\nabla')$ extends to an isomorphism
\[\Phi^*\widetilde{(V',\nabla',\Fil')}\rightarrow (V',\nabla')\]
Denote 
\[(V'',\nabla'',\Fil'') := (V,\nabla,\Fil)/(V',\nabla',\Fil').\]
We constructed the embedding $(V',\nabla',\Fil')\hookrightarrow (V,\nabla,\Fil)$ to be saturated and strict with respect to the filtrations. Therefore the triple $(V'',\nabla'',\Fil'')$ is a filtered logarithmic de Rham bundle.

Applying Faltings' tilde functor, one has short exact sequence
\[0\rightarrow \widetilde{V'} 
\longrightarrow \widetilde{V}  
\longrightarrow \widetilde{V''} \rightarrow 0.\]
Locally, one has the following commutative diagram
\begin{equation*}
\xymatrix{
0 \ar[r]  & 
\Phi^*\widetilde{V'}  \ar[r] \ar[d]^{\varphi'} &
\Phi^*\widetilde{V}   \ar[r] \ar[d]_\cong^{\varphi} &
\Phi^*\widetilde{V''} \ar[r] \ar@{..>}[d]^{\exists\varphi''} & 0  \\
0 \ar[r] & 
V' \ar[r] &
V \ar[r] &
V'' \ar[r] & 0  \\
}
\end{equation*}
where $\varphi'$ (by abusing notation as in (\ref{varphi'})) is the restriction of $\varphi$ on $\Phi^*\widetilde{V'}$, which extends the $\varphi'$ in (\ref{varphi'}). The image $\varphi'(\Phi^*\widetilde{V'})$ is contained in $V'$, because 
\[\varphi(\Phi^*(\widetilde{V'})) \subset \varphi(\Phi^*( \mathcal V') \cap \varphi(\Phi^*(\widetilde{V})) \subset \mathcal V'\cap V  = V'.\]
Since $\varphi$ is surjective,  $\varphi''$ is also surjective. On the other hand $\Phi^*\widetilde{V''}$ and $V''$ are bundles with the same rank, so $\varphi''$ is actually an isomorphism. By the snake lemma $\varphi'$ is also an isomorphism. This proves the strong divisibility, as desired.

By the Fontaine-Lafaille-Faltings correspondence, to $(V',\nabla',\Fil',\varphi',\iota')$ one may attach a (log crystalline) subrepresentation $\rho'$ of $\rho$ of strictly smaller rank \cite[Theorem 2.6* (i)]{Fal89}. It follows that $\rho_{\mathbb Q}$ is not irreducible, contradicting our hypothesis.
\end{proof}

The following is a version of Lemma \ref{irrF-Isoc} with the stronger hypothesis that $\rho_{\mathbb Q}$ is \emph{geometrically absolutely irreducible}. With these assumptions, we show something much stronger than the conclusion of Lemma \ref{irrF-Isoc}. This will be essential in the proof of Theorem \ref{mainThm}.

\begin{lem}\label{absolutely_irreducible}Notation as in Setup \ref{setup} and suppose $X/W$ is a curve. Let $\rho\colon \pi_1(U_K)\rightarrow \textrm{GL}_r(\mathbb{Z}_{p^f})$ be a logarithmic crystalline representation such that $\rho_{\mathbb Q}$ is geometrically absolutely irreducible. Let $(V,\nabla,\Fil, \Phi,\iota)$ be the associated logarithmic Fontaine-Faltings module with endomorphism structure $\iota$. Let $(V,\nabla)^{(0)}$ be the identity eigenspace of the $\iota$-action. Then the following holds.
\begin{enumerate}
\item The logarithmic de Rham bundle $(V,\nabla)^{(0)}_{{\mathbb Q}^{\rm unr}_p}$ on $(X,S)_{{\mathbb Q}_p^{\rm unr}}$ admits no proper de Rham subbundle.
\item the  object $(V,\nabla,\Phi,\iota)_{\mathbb Q}\in \FIsoc^{\dagger}(U_k)_{\mathbb{Q}_{p^f}}$ is absolutely irreducible.
\end{enumerate}
\end{lem}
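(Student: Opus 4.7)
The plan is to prove both parts by a base-change reduction to Lemma~\ref{irrF-Isoc}: part (2) follows fairly directly, while part (1) requires additional work to transport a proper de Rham subbundle of the identity eigenspace to a proper sub-$F$-isocrystal of the full object, then contradict (2).

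For part (2), for each integer $n\geq 1$, I would set $\rho_n := \rho\otimes_{\mathbb{Z}_{p^f}}\mathbb{Z}_{p^{fn}}$, a log crystalline representation into $\mathrm{GL}_r(\mathbb{Z}_{p^{fn}})$. Since absolute irreducibility is preserved under any scalar extension, $\rho_n^{\text{geo}}$ remains absolutely irreducible over $\mathbb{Q}_{p^{fn}}$, and in particular irreducible. Its log Fontaine--Faltings module is $(V,\nabla,\Fil,\varphi,\iota_n)$ with extended endomorphism structure $\iota_n\colon\mathbb{Z}_{p^{fn}}\to\End$. Applying Lemma~\ref{irrF-Isoc} with $f$ replaced by $fn$ shows that $(V,\nabla,\varphi,\iota)_{\mathbb Q}\otimes_{\mathbb{Q}_{p^f}}\mathbb{Q}_{p^{fn}}$ is irreducible in $\FIsoc^{\dagger}(U_1)_{\mathbb{Q}_{p^{fn}}}$. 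Since this holds for all $n$, the $F$-isocrystal is irreducible after base change to any finite unramified extension of $\mathbb{Q}_{p^f}$, giving the absolute irreducibility (modulo the subtlety that ramified extensions of the coefficient field do not produce new sub-objects in $\FIsoc^{\dagger}$).

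For part (1), I would argue by contradiction. Suppose $\mathcal W\subsetneq(V,\nabla)^{(0)}_{\mathbb{Q}_p^{\rm unr}}$ is a proper $\nabla$-stable subbundle. Over $\mathbb{Q}_p^{\rm unr}$, the $\iota$-eigenspace decomposition $V_{\mathbb{Q}_p^{\rm unr}}=\bigoplus_{i=0}^{f-1}V^{(i)}$ is respected by $\varphi$ (which cycles the eigenspaces up to Frobenius twists), and the composite $\varphi^f$ restricts to an isogeny of $V^{(0)}$. The strategy is to first extract from $\mathcal W$ a nonzero proper $\nabla$- and $\varphi^f$-stable subbundle $\mathcal W'\subset V^{(0)}$, then cycle it via $\varphi$ through the remaining eigenspaces by setting $(\mathcal W')^{(i+1)} := \varphi(\Phi^*\widetilde{(\mathcal W')^{(i)}})$ to obtain $\bigoplus_i(\mathcal W')^{(i)}$, a proper $\nabla$-, $\varphi$-, $\iota$-stable sub of $(V,\nabla,\varphi,\iota)_{\mathbb{Q}_p^{\rm unr}}$, contradicting (2).

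The main obstacle is the extraction step in (1): producing a nonzero proper $\varphi^f$-stable refinement $\mathcal W'$ from $\mathcal W$. The natural attempt is the stable limit of the descending chain $\mathcal W\supseteq\mathcal W\cap\varphi^f(\Phi^{f*}\widetilde{\mathcal W})\supseteq\cdots$; if this stabilizes at a nonzero sub, we are done. If instead the descending chain collapses to zero while the ascending chain $\mathcal W\subseteq \mathcal W+\varphi^f(\Phi^{f*}\widetilde{\mathcal W})\subseteq\cdots$ fills all of $V^{(0)}$, one must invoke $p$-adic non-abelian Hodge theory—using the Ogus--Vologodsky/Lan--Sheng--Zuo inverse Cartier transform to translate the statement into one about the associated Higgs bundle $(E,\theta)^{(0)}$, where the absence of proper $\theta$-stable subsheaves would follow from the geometric absolute irreducibility of $\rho$ via Faltings' $p$-adic Simpson correspondence.
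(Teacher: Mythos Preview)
Your approach to part (2) via coefficient extension and Lemma~\ref{irrF-Isoc} is reasonable, and the acknowledged subtlety about ramified coefficient extensions is real but manageable. However, your strategy for part (1) has a genuine gap, and the paper takes a quite different route.

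The gap is exactly the ``extraction step'' you flag. Given an arbitrary $\nabla$-stable subbundle $\mathcal W\subset (V,\nabla)^{(0)}_{\mathbb Q_p^{\rm unr}}$, there is no reason for the descending chain $\mathcal W\cap\varphi^f(\Phi^{f*}\widetilde{\mathcal W})\cap\cdots$ to stabilize at something nonzero, nor for the ascending chain to stop short of $V^{(0)}$; a generic de Rham subbundle simply need not see the Frobenius structure at all. Your fallback---``invoke $p$-adic non-abelian Hodge theory''---names the right toolbox but not an argument: Faltings' $p$-adic Simpson correspondence does not by itself turn an arbitrary de Rham subbundle into a Higgs subbundle, and the absence of proper $\theta$-stable subsheaves of $(E,\theta)^{(0)}$ is essentially equivalent to what you are trying to prove, so you would be assuming the conclusion.

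The paper does \emph{not} reduce (1) to (2). Instead it argues directly on the representation side. Starting from a proper de Rham subbundle over some finite unramified $K'/K$, one first spreads it out to an integral de Rham subbundle $(W,\nabla)\subset (V,\nabla)^{(0)}_{\mathcal O'}$ whose special fiber is semistable of degree~$0$ (using that $(V,\nabla)^{(0)}_{k'}$ is semistable of degree~$0$). One then runs the Higgs--de Rham flow with initial term $(W,\nabla)$, taking the induced filtrations, to obtain a sub-flow $\HDF'\subset\HDF$. The key point is that although $\HDF'$ need not be preperiodic integrally, it \emph{is} preperiodic modulo $p^m$ for every $m$, because $(V,\nabla)_{W_m(k')}$ has only finitely many degree-$0$ subbundles. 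These truncated periodic sub-flows yield a compatible system of torsion crystalline subrepresentations $\rho'_m\colon\pi_1(U_{\mathbb Q_p^{\rm unr}})\to\GL_s(W_m(\bar k))$, whose inverse limit is a proper subrepresentation of $\rho$ over $W(\bar k)$, contradicting geometric absolute irreducibility of $\rho_{\mathbb Q}$ directly. So the Frobenius stability you were trying to enforce on $\mathcal W$ is replaced by the pigeonhole-forced periodicity of the sub-flow at each finite level.
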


\begin{proof}
We first prove the first statement. Fist of all, $(V,\nabla)^{(0)}_{K}$ is a semistable de Rham bundle of degree 0. Suppose for contradiction that there is a logarithmic de Rham subbundle $(V',\nabla')_{K'}$ of $(V,\nabla)_{K'}$ for some finite unramified extension $K'/K$. Set $\mathcal O'$ to be the ring of integers and $k'$ to be the residue field. Then $(V',\nabla')_{K'}$ automatically has nilpotent residues and hence also has degree 0 and is therefore semistable. We claim that we may find a logarithmic de Rham subbundle $(W,\nabla)$ of $(V,\nabla)^{(0)}_{\mathcal O'}$ over $(X,S)_{\mathcal O'}$ such that $(W,\nabla)_{K'}\cong (V',\nabla')_{K'}$ and $(W,\nabla)_{k'}$ is semistable of degree 0. First of all, there is clearly an extension to a torsion-free logarithmic de Rham subsheaf $(W,\nabla)$. By \cite[Section 6]{KYZlefschetz}, the degree of $W_{k'}$ is 0; therefore $(W,\nabla)_{k'}$ is a degree 0 logarithmic de Rham subsheaf of $(V,\nabla)_{k'}$. Note that $(W,\nabla)_{k'}\subset (V,\nabla)^{(0)}_{k'}$; as the latter is semistable of degree 0, it follows that the inclusion $(W,\nabla)_{k'}\subset (V,\nabla)^{(0)}_{k'}$ is saturated and hence is a de Rham subbundle.\footnote{Note that because $X_k$ is a smooth curve, any torsion-free sheaf is automatically a vector bundle.} As both de Rham bundles have degree 0 and $(V,\nabla)_{k'}$ is semistable, it follows that $(W,\nabla)_{k'}$ is semistable.

Let $\HDF$ be the (logarithmic) Higgs-de Rham flow attached to $\rho$. Run the Higgs-de Rham flow over $X$ with initial term $(W,\nabla)$, where the Hodge filtrations are chosen to be the restrictions of $\Fil$ on $V$. One obtains a sub Higgs-de Rham flow $\HDF'$ of $\HDF$. In general, this sub Higgs-de Rham is {\it not preperiodic}. Nonetheless, we claim that $\HDF'$ is preperiodic over each truncated level $W_m(k')$. This holds because $(V,\nabla)_{W_m(k')}$ has only finitely many subbundles of degree $0$ for each $m$.

Note that because there are only finitely many (isomorphism classes of) Higgs terms in $\HDF$ by the periodicity. Therefore we may inductively shift the index of $\HDF'$, to find a sequence of sub Higgs-de Rham flows $\HDF'_{W_m(k')}\subset \HDF_{W_m(k')}$ which are periodic with periodicity $f_m$, and satisfying $\HDF'_{W_{m+1}(k')}\equiv \HDF'_{W_m(k')} \pmod{p^m}$ and $f_m\mid f_{m+1}$.

To each of these truncated periodic Higgs-de Rham flows, there is an associated torsion logarithmic crystalline representation
\[\rho'_m\colon \pi_1(U_{\mathbb Q_p^{\rm unr}}) \rightarrow GL_s(W_{m}(\overline{k})).\]
Recall that $\widehat{\mathcal O_{\mathbb Q_p^{\rm unr}}}$, the $p$-adic completion of $\mathcal O_{\mathbb Q_p^{\rm unr}}$, is equal to $W(\overline{k})$.
Taking the inverse limit over $m$, one obtains a sub-representation
\[\rho'\colon \pi_1(U_{\mathbb Q_p^{\rm unr}}) \rightarrow GL_s(\widehat{\mathcal O_{\mathbb Q_p^{\rm unr}}})\]
of $\rho\colon \pi_1(U_{\mathbb Q_p^{\rm unr}}) \rightarrow GL_r(\widehat{\mathcal O_{\mathbb Q_p^{\rm unr}}})$. We claim this is in contradiction with the fact that $\rho\otimes {\mathbb Q_p}$ is geometrically absolutely irreducible. Indeed,$\rho'\mid_{\pi_1(U_{\bar K})}\otimes {\mathbb C_p}$ is a non-trivial sub-representation of $\rho\mid_{\pi_1(U_{\bar K})}\otimes {\mathbb C_p}$; on the other hand, the fact that  $\rho\mid_{\pi_1(U_{\bar K})}\otimes {\overline{\mathbb Q}_p}$ is irreducible implies that $\rho\mid_{\pi_1(U_{\bar K})}\otimes {\mathbb C_p}$ is also irreducible.
\end{proof}

We come to the following crucial definition.
\begin{defi} Notation as in Setup \ref{setup}. Let $(\mathcal{V},\nabla,\varphi,\iota)$ be an object of $\FIsoc^{\nilp}_{\log}(X_1,S_1)_{\mathbb Q_{p^f}}$. An \emph{extension} of $(\mathcal{V},\nabla,\varphi,\iota)$ is an logarithmic $F$-crystal in finite, locally free modules $(V,\nabla,\varphi,\iota)$ with $\mathbb Z_{p^f}$-structure such that $(V,\nabla,\varphi,\iota)_{\mathbb Q_p}\cong (\mathcal{V},\nabla,\varphi,\iota)$. An extension $(V,\nabla,\varphi,\iota)$ is said to be \emph{semistable} if the logarithmic flat connection $(V,\nabla)_1$ on $(X_1,S_1)$ is semistable.
\end{defi}

Recall that a rank-$1$ $F$-crystal over $k$ with $\mathbb Z_{p^f}$-structure is a pair $(L,\varphi)$ where $L$ is a finite free $W\otimes_{\mathbb Z_p} \mathbb Z_{p^f}$-module and $\varphi\colon L\rightarrow L$ is an injective $\sigma\otimes 1$-semilinear map where $\sigma\colon W\rightarrow W$ is the canonical lift of Frobenius. For any element $r\in \left(K\otimes_{\mathbb Q_p} \mathbb Q_{p^f}\right)^\times \cap (W\otimes_{\mathbb Z_p}\mathbb Z_{p^f})$, we denote 
$L_r = W\otimes_{\mathbb Z_p} \mathbb Z_{p^f}\cdot e$ with $\varphi_{L_r}(e)=re$.
 Conversely, for any rank-$1$ $F$-crystal over $W$ with $\mathbb Z_{p^f}$-structure is isomorphic to some $L_r$.

By tensoring $\mathbb Q_p$, one gets an rank-1 $F$-isocrystal $\mathcal L_r=L_r\otimes_{\mathbb Z_p}\mathbb Q_p$ over $k$ with $\mathbb Q_{p^f}$-structure.

Let $(V,\nabla,\varphi,\iota)$ be a logarithmic $F$-crystal in finite, locally free modules over $(X_1,S_1)$ with $\mathbb Z_{p^f}$-structure. Locally one can view $\varphi$ as a $\Phi\otimes 1$-semilinear map of the $\mathcal O_X\otimes \mathbb Z_{p^f}$-modules. We define the twist of $(V,\nabla,\varphi,\iota)$ by $L_r$ to be: $(V,\nabla,\varphi,\iota)\otimes L_r=(V,\nabla,r\cdot\varphi,\iota)$.

\begin{rmk}\label{InvariantUnderlyingdRBundle}
	Twisting by a constant rank 1 object does not change the underlying de Rham bundle.
\end{rmk}

\begin{lem}\label{BijectionExtensions}
	Let $(\mathcal{V},\nabla,\varphi,\iota)$ be an object of $\FIsoc^{\nilp}_{\log}(X_1,S_1)_{\mathbb Q_{p^f}}$. Let $L$ be a rank-1 $F$-crystal over $k$ with $\mathbb Z_{p^f}$-structure and let $\mathcal L=L\otimes_{\mathbb Z_p}\mathbb Q_p$. Denote $\varphi'=\varphi\otimes\varphi_L$. Then tensoring $L$ induces an injection 
	\[\{\text{extension of } (\mathcal{V},\nabla,\varphi,\iota) \} \rightarrow
	  \{\text{extension of } (\mathcal{V},\nabla,\varphi',\iota)\} \]
\end{lem}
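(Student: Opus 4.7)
The plan is to reduce the lemma to a direct cancellation argument by trivializing the twist. First I would fix a generator $e$ of $L$ as a free rank-$1$ module over $W\otimes_{\mathbb Z_p}\mathbb Z_{p^f}$, so that $\varphi_L(e)=r\cdot e$ for some $r\in (K\otimes_{\mathbb Q_p}\mathbb Q_{p^f})^\times\cap(W\otimes_{\mathbb Z_p}\mathbb Z_{p^f})$; in particular $L\cong L_r$. Pulling $L$ back to $(X_1,S_1)$, the assignment $v\otimes e\leftrightarrow v$ yields an isomorphism $V\otimes L\cong V$ compatible with $\nabla$ and $\iota$ (consistent with Remark \ref{InvariantUnderlyingdRBundle}), under which the tensored Frobenius $\varphi\otimes\varphi_L$ becomes the semilinear map $r\cdot\varphi$, by the computation $(\varphi\otimes\varphi_L)(v\otimes e)=\varphi(v)\otimes re = r\varphi(v)\otimes e$. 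The same calculation on the isocrystal side exhibits $\varphi'=\varphi\otimes\varphi_L$ as $r\cdot\varphi$ on $\mathcal V$. Hence the twist operation can be described concretely as
\[(V,\nabla,\varphi,\iota)\ \longmapsto\ (V,\nabla,r\cdot\varphi,\iota),\]
and this is visibly a well-defined map into extensions of $(\mathcal V,\nabla,\varphi',\iota)$, descending to isomorphism classes.

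For injectivity, I would take two extensions $(V_1,\nabla_1,\varphi_1,\iota_1)$ and $(V_2,\nabla_2,\varphi_2,\iota_2)$ together with an isomorphism $\psi\colon V_1\to V_2$ of their $L$-twists. Then $\psi$ is linear, respects $\nabla$ and $\iota$, and satisfies $\psi\circ (r\varphi_1) = (r\varphi_2)\circ\psi$. Since $\psi$ is linear and $r$ is a scalar, this rearranges to $r\cdot(\psi\circ\varphi_1-\varphi_2\circ\psi)=0$. So it will suffice to show multiplication by $r$ is injective on $V_2$, from which $\psi\circ\varphi_1=\varphi_2\circ\psi$ follows and $\psi$ becomes an isomorphism of the untwisted $F$-crystals.

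The key observation — and the only real content of the argument — is this non-zero-divisor claim. Since $W\otimes_{\mathbb Z_p}\mathbb Z_{p^f}$ is $p$-torsion free and $r$ is a unit after inverting $p$, $r$ is a non-zero-divisor in $W\otimes_{\mathbb Z_p}\mathbb Z_{p^f}$. By flatness of $\mathcal O_X\otimes_{\mathbb Z_p}\mathbb Z_{p^f}$ over $W\otimes_{\mathbb Z_p}\mathbb Z_{p^f}$ (guaranteed by the smoothness of $X/W$), $r$ remains a non-zero-divisor in the ring over which $V_2$ is locally free, and hence acts injectively on $V_2$. I do not expect any substantial obstacle; the lemma is essentially a formal consequence of unpacking the twist construction together with this non-zero-divisor observation.
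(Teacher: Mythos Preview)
Your argument is correct and is essentially an explicit unpacking of the paper's one-line proof: both hinge on the observation (Remark~\ref{InvariantUnderlyingdRBundle}) that twisting by a constant rank-$1$ object leaves the underlying de Rham bundle unchanged, so that an isomorphism of the twisted $F$-crystals already gives an isomorphism of $(V,\nabla,\iota)$. The paper phrases the remaining step as ``an extension is uniquely determined by the extension of the underlying de Rham bundle,'' whereas you make this concrete via the cancellation $r\cdot(\psi\circ\varphi_1-\varphi_2\circ\psi)=0$ together with the non-zero-divisor property of $r$; these are two ways of saying the same thing.
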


\begin{proof} Since an extension of an $F$-isocrystal is uniquely determined by the extension of the underlying de Rham bundle and twisting a constant rank-1 object doesn't change the underlying de Rham bundle, the map is injective.
\end{proof}

\begin{lem}\label{finitenessExtension} Notation as in Setup \ref{setup}, and suppose $X/W$ is a curve. Let $(\mathcal V,\nabla,\varphi, \iota)$ be an irreducible object of $\FIsoc^{\nilp}_{\log}(X_1,S_1)_{\mathbb{Q}_{p^f}}$. Then there exists only finitely many isomorphism classes of semistable extensions $(V,\nabla,\varphi,\iota)$ of  $(\mathcal V,\nabla,\varphi, \iota)$.
\end{lem}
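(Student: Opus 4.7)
The plan is to combine the boundedness of the moduli of semistable logarithmic flat bundles of fixed numerical invariants with a rigidity argument coming from the irreducibility of $\mathcal{V}$.

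First I would pin down the numerical invariants of the underlying flat bundle $(V,\nabla)_1$ for any semistable extension $(V,\nabla,\varphi,\iota)$. The rank is visibly fixed, while the degree is determined by $\mathcal{V}$ up to the finite twisting ambiguity described in Lemma~\ref{BijectionExtensions}: the determinant $\det V$ is a rank-$1$ $F$-crystal extension of $\det\mathcal{V}$, and such rank-$1$ extensions are unique up to a constant twist $L_r$, forcing $\deg V$ to take a single value. Consequently the underlying flat bundles of all semistable extensions land in a moduli space of semistable logarithmic flat bundles on $(X_1,S_1)$ with fixed rank $r$ and degree $d$; since $X_1$ is a curve, this moduli space is quasi-projective and in particular of finite type.

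Second I would use the irreducibility of $\mathcal{V}$ to control the fiber of the map sending an extension to its underlying flat bundle. By Schur's lemma applied in the category $\FIsoc^{\nilp}_{\log}(X_1,S_1)_{\mathbb{Q}_{p^f}}$, the algebra $D:=\mathrm{End}_{\FIsoc}(\mathcal{V})$ is a finite-dimensional division algebra over $\mathbb{Q}_{p^f}$ and $\mathrm{Aut}_{\FIsoc}(\mathcal{V})=D^\times$. Two lattice extensions $V_1,V_2\subset\mathcal{V}$ give isomorphic $F$-crystals with structure if and only if some $g\in D^\times$ carries $V_1$ onto $V_2$; hence isomorphism classes of semistable extensions identify with the $D^\times$-orbits on the set of $\nabla,\varphi,\iota$-stable lattices in $\mathcal{V}$ whose underlying flat bundle is semistable. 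The stabilizer in $D^\times$ of the bounded region of semistable lattices is an open compact subgroup (essentially the unit group of a maximal order $\mathcal{O}_D\subset D$), so this group acts with finitely many orbits on any bounded subset of lattices, completing the argument.

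The main obstacle is making the final orbit-counting step precise. One must verify that (a) the degree of the underlying flat bundle really is pinned down in the logarithmic setting, where the boundary divisor $S_1$ contributes to degree computations and must be handled via the determinant argument above, and (b) the subgroup of $D^\times$ preserving the bounded family of semistable lattices is compact, so that the action on the finite-type moduli has only finitely many orbits. Both points should follow from standard structural facts about orders in central simple algebras combined with the quasi-projectivity of the moduli space from the first paragraph, but executing this carefully is the delicate part of the proof.
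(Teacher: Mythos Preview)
Your proposal has the right broad intuition—irreducibility of $\mathcal{V}$ together with some form of boundedness should force finiteness—but the crucial orbit-counting step contains a genuine gap.

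The problem lies in your second paragraph. You invoke the structure of $D^\times$ and claim that an open compact subgroup such as $\mathcal{O}_D^\times$ acts with finitely many orbits on ``any bounded subset of lattices.'' But the lattices in question are not lattices in a finite-dimensional $\mathbb{Q}_p$-vector space; they are locally free sub-$\mathcal{O}_X$-modules of the vector bundle $\mathcal{V}$ over the $p$-adic formal curve. There is no evident sense in which the collection of such lattices forms a bounded $p$-adic set on which a compact group acts with finite quotient. Your first paragraph only controls the mod-$p$ reductions $(V,\nabla)_1$, placing them in a finite-type moduli space over the finite field $k$; this says nothing about the full $p$-adic lattice, and two distinct extensions can easily have isomorphic mod-$p$ reductions. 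The bridge from ``mod-$p$ reductions lie in a finite set'' to ``finitely many $D^\times$-orbits on lattices'' is simply absent, and the analogy with orders in central simple algebras over local fields does not carry over to sheaves of lattices on a curve.

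The paper's argument is quite different and supplies precisely what is missing. Assuming infinitely many non-isomorphic semistable extensions $T_i$, one normalizes them inside a fixed $T_{i_0}$ with $T_i\not\subset pT_{i_0}$, then uses finiteness of degree-$0$ subbundles of the semistable bundle $T_{i_0}/pT_{i_0}$ over the \emph{finite} field $k$ (this is where the finite residue field enters, not just finite type of a moduli space) to find infinitely many $T_i$ with common image mod $p$. Iterating produces a strictly decreasing chain $T_{i_0}\supsetneq T_{i_1}\supsetneq\cdots$ with $T_{i_m}\not\subset pT_{i_n}$ for all $m,n$. One then checks that the intersection $T_\infty$ is nonzero but of strictly smaller rank, so $(T_\infty)_{\mathbb{Q}_p}$ is a proper sub-$F$-isocrystal of $\mathcal{V}$, contradicting irreducibility. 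In effect the paper \emph{proves} the boundedness you want by contradiction: an unbounded chain of lattices manufactures the forbidden subobject. Your proposal skips this essential construction.
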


\begin{proof}
Assume there are infinitely many isomorphism classes of semistable extensions,  choose a representative from each isomorphism class, and enumerate them as follows $\{T_i=(V,\nabla,\varphi,\iota)_i\mid i\in I\}$. We will construct an infinite descending chain $T_{i_0}\supset T_{i_1}\supset T_{i_2}\dots$ such that the intersection yields a proper logarithmic sub-$F$-isocrystal, contradicting out original assumption.

Fix one element $i_0\in I$, and set $I_0=I$ and $J_0 = I_0-\{i_0\}$. By assumption, we may embed each $(V,\nabla,\varphi,\iota)$ as a lattice in $(\mathcal V,\nabla,\varphi, \iota)$. By multiplying by a suitable power of $p$ on each $T_i$, for $i\in J_0$, we may assume that 
\[T_i \subset T_{i_0} \quad \text{ and } \quad  T_i \not\subset pT_{i_0}.\]

This is equivalent to saying that the image of $V_i$ in $V_{i_0}/pV_{i_0}$ is a proper submodule. In fact, we claim that the image, namely $(V_i+pV_{i_0})/pV_{i_0}$, together with the induced logarithmic flat connection, is a semistable logarithmic de Rham bundle on $(X_1,S_1)$. Indeed, both $V_{i_0}/pV_{i_0}$ and $V_i/pV_i$ admit semistable flat connections of degree zero. Therefore the image $(V_i+pV_{i_0})/pT_{i_0}$ has degree 0 and hence, when equipped with the induced connection, is semistable. Finally, we claim that $(V_i+pV_{i_0})/pV_{i_0}$ is a subbundle of $V_{i_0}/pV_{i_0}$ (as opposed to merely a subsheaf). If not, the saturation would be a subbundle; but any non-trivial saturation increases the degree. As $(V_i+pV_{i_0})/pV_{i_0}$ with the induced flat connection is semi-stable, so is the saturation; this contradicts semistability of $V_{i_0}/pV_{i_0}$.

Consider the map  
\begin{equation*}
\xymatrix@R=0mm{
f_0\colon J_0 \ar[r] & 
\Sigma_0:= \{\text{ proper sub bundles of } V_{i_0}/p\cdot V_{i_0} \text{ of degree } 0\}\\
i \ar@{|->}[r] &  \Big(V_i+pV_{i_0}\Big)/pV_{i_0}
}
\end{equation*}
 The initial set is infinite by assumption. On the other hand, the terminal set is finite; indeed, this follows because the set all subbundles with fixed degree of a given bundle forms a bounded family and our base field is finite.

Thus there exists a submodule $\overline{M}_0$ of $V_{i_0}/pV_{i_0}$ such that $I_1:=f_0^{-1}(\overline{M}_0)$ is infinite. For any fixed $i\in I_1$, the submodule $V_i+pV_{i_0}$ is the inverse image of $\overline{M}_0$ under surjective map $V_{i_0} \rightarrow V_{i_0}/pV_{i_0}$; hence, the submodule $V_i+pV_{i_0}$ does not depend on the choice of $i \in I_1$. We further claim that for each $i\in I_0$, the module $V_i+pV_{i_0}$ together with the induced logarithmic flat connection, Frobenius structure, and endomorphism structure, yields a semistable extension of $(\mathcal V,\nabla,\varphi,\iota)$. This follows from the fact that $V_i/pV_i$ and $V_0/pV_0$, equipped with their flat connections, are semistable de Rham bundles of degree 0. Thus there exists $i_1\in I_1$ such that 
\[T_{i_1} = T_i+pT_{i_0} \text{ for all } i \in I_1.\]
Denote $J_1 = I_1\setminus\{i_1\}$. Then for all $i\in J_1$ one has
\[T_i \subsetneq T_{i_1}\subsetneq T_{i_0} \quad \text{ and } \quad  T_i \not\subset pT_{i_1}.\] 
Repeating the process, one can find a sequence of extensions
\[\cdots \subsetneq  T_{i_3}\subsetneq  T_{i_2} \subsetneq T_{i_1}\subsetneq  T_{i_0}\]
satisfying $T_{i_m}\not\subset pT_{i_n}$ for all $m,n\geq0$. 

Denote $T_{\infty} = \cap_{k=0}^{\infty} T_{i_k}$. In the following we show that $(T_{\infty})_{\mathbb Q_p}$ is a proper logarithmic sub $F$-isocrystal of $(\mathcal V,\nabla,\varphi,\iota)$. Thus we get a contradiction with the irreducibility of $(\mathcal V,\nabla,\varphi,\iota)\in \FIsoc^{\nilp}_{\log}(X_1,S_1)_{\mathbb Q_{p^f}}$.

Locally, we may assume $T_{i_k}$ are free modules of the same rank $r$ over a regular local ring $R$ satisfying 
\[\cdots \subsetneq  T_{i_3}\subsetneq  T_{i_2} \subsetneq T_{i_1}\subsetneq  T_{i_0}.\]
Since $T_{\infty} = \cap_{k=0}^{\infty} T_{i_k}$ is torsion-free and finitely generated over $R$, we may choose a free sub $R$-module of $T'_{\infty}\subseteq T_{\infty}$ with maximal rank $r_\infty\leq r$. We only need to show 
\[T_{\infty}\neq 0 \text{ and } r_\infty \neq  r.\]

We first show that $T_{\infty}\neq 0$. For a given positive integer $n$, consider the descending sequence 
\[\Bigg(\Big(T_{i_k} + p^nT_{i_0}\Big)/p^nT_{i_0}\Bigg)_k.\]
We claim the sequence stabilizes for $k\gg 0$. Each term is contained in $T_{i_0}/p^nT_{i_0}$.  Let's consider the index between $T_{i_0}$ and $p^nT_{i_0}$, which has only $p$-primary part and is finite. Thus the increasing sequence of the index $[T_{i_0}:T_{i_k}+p^nT_{i_0}]$ with upper bound $[T_{i_0}:p^nT_{i_0}]$ is stable. This implies $[T_{i_0}:T_{i_k}+p^nT_{i_0}] = [T_{i_0}:T_{i_{k+1}}+p^nT_{i_0}]$ for sufficiently large $k$, So $T_{i_k}+p^nT_{i_0} = T_{i_{k+1}}+p^nT_{i_0}$  for $k\gg 0$. Denote 
\[\overline{T}^{(n)}_0 := \bigcap_{k=0}^\infty \Big(T_{i_k} + p^nT_0\Big)/p^nT_0 = \Big(T_N + p^nT_0\Big)/p^nT_0\neq 0, \text{ for } N>>0.\]
Thus one has an surjective inverse system
\[\cdots \twoheadrightarrow \overline{T}^{(3)}_0 \twoheadrightarrow \overline{T}^{(2)}_0 \twoheadrightarrow \overline{T}^{(0)}_0 \twoheadrightarrow \overline{T}^{(0)}_0\]
whose inverse limit $\varprojlim\limits_{n} \overline{T}^{(n)}_0$ is non-empty. By the left exactness of inverse limits, the inclusion maps 
\[\left(\overline{T}^{(n)}_0 \subset \Big(T_{i_k} + p^nT_0\Big)/p^nT_0\right)_n\]
induce an injective map 
\[\varprojlim\limits_{n} \overline{T}^{(n)}_0 \hookrightarrow \varprojlim\limits_{n}\Big(T_{i_k} + p^nT_0\Big)/p^nT_0 = T_{i_k}.\]
Thus $\varprojlim\limits_{n} \overline{T}^{(n)}_0 \subset T_\infty = \bigcap_k T_{i_k}$. This implies that $T_\infty\neq0$.
The fact that $T_{\infty}\neq 0$ immediately implies that $T_{\infty}$ yields a logarithmic $F$-crystal in finite modules on $(X_1,S_1)$.

We now show that $r_{\infty}\neq r$. By \'etale localization, we reduce to the following setup in linear algebra. Let $A=W<x>$ be the $p$-adic completion of a polynomial ring in a single variable over $W$, and let $M_0\supsetneq M_1\supsetneq\dots$ be an infinite nested collection of finite free modules of fixed rank $r$, that is strictly decreasing and such that $M_j\not\subset p M_k$ for $j,k\geq 0$. Set $M_{\infty}:=\cap^{\infty}_{j=0} M_j$. We wish to prove that $M_{\infty}$ has rank smaller than $r$; equivalently, that it does not contain a lattice $L_{\infty}$ in $M_0\otimes \text{Frac}(A)$.  If it did, then $M_{\infty}$ would have finite, $p$-primary index in $M_0$. However, the index of $M_j$ in $M_0$ gets arbitrarily large; indeed, if $[M_j:M_k]=1$, then $M_j=M_k$. As index is multiplicative, we obtain a contradiction. 
\end{proof}

\begin{lem}\label{finiteHodgeFil} Notation as in Setup \ref{setup}. Let $(\mathcal{V},\nabla,\varphi,\iota)$ be an irreducible object of $\FIsoc^{\nilp}_{\log}(X_1,S_1)_{\mathbb Q_{p^f}}$. Let $(V,\nabla,\varphi,\iota)$ be an extension $(\mathcal{V},\nabla,\varphi,\iota)$. Then there exists only finitely many Hodge filtrations $\Fil_1$ on $(V,\nabla,\iota)_1 := (V,\nabla,\iota)\pmod{p}$ with $\Fil_1^0V_1=V_1$ and $\Fil_1^pV_1=0$ such that there exists $\varphi_1$ rendering the quintuple $(V,\nabla,\Fil,\varphi,\iota)_1$ a logarithmic Fontaine-Faltings module with endomorphism structure over $(X_1,S_1)$. 
\end{lem}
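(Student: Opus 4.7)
The plan is to use the logarithmic Fontaine-Laffaille correspondence to translate each Hodge filtration $\Fil_1$ satisfying the axioms of the lemma into a continuous mod-$p$ representation of the \'etale fundamental group, and then to bound the number of such representations via topological finiteness.

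For each $\Fil_1$ as in the statement, the quintuple $(V,\nabla,\Fil,\varphi,\iota)_1$ is by construction a mod-$p$ logarithmic Fontaine-Faltings module over $(X_1,S_1)$ with $\mathbb Z_{p^f}$-structure, and hence corresponds via Faltings' functor to a continuous representation
\[
\bar\rho_{\Fil_1}\colon \pi_1^{\text{\'et}}(U_1)\to \mathrm{GL}_r(\mathbb F_{p^f}).
\]
Because $\Fil_1$ is part of the defining data of the Fontaine-Faltings module and the correspondence is fully faithful, distinct $\Fil_1$'s, considered up to the action of $\mathrm{Aut}(V_1,\nabla_1,\varphi_1,\iota_1)$ on filtrations, yield non-isomorphic $\bar\rho_{\Fil_1}$'s. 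Under the irreducibility hypothesis on $(\mathcal V,\nabla,\varphi,\iota)$ (cf.\ Lemma~\ref{irrF-Isoc}), a Schur-type argument shows that this automorphism group is contained in $\mathbb F_{p^f}^{\times}\cdot\mathrm{id}$ and is therefore finite. It thus suffices to bound the number of isomorphism classes of $\bar\rho_{\Fil_1}$'s arising in this way.

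Since every such $\bar\rho_{\Fil_1}$ is log crystalline with Hodge-Tate weights in $[0,p-1]$, it factors through the tame logarithmic \'etale fundamental group $\pi_1^{\text{\'et,tame}}(U_1,S_1)$, which is topologically finitely generated. Combined with the finiteness of the target group $\mathrm{GL}_r(\mathbb F_{p^f})$, this yields only finitely many continuous homomorphisms, and hence only finitely many Hodge filtrations $\Fil_1$. The main obstacle I anticipate is the tameness claim, namely that mod-$p$ log Fontaine-Faltings modules with Hodge weights in $[0,p-1]$ produce tamely ramified representations along $S_1$; while expected, this requires careful bookkeeping within the logarithmic Fontaine-Laffaille correspondence. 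A fallback strategy is to parameterize filtrations of each fixed rank profile $(r_0,\ldots,r_{p-1})$ as sections of an appropriate flag bundle over $X_1$, to impose Griffiths transversality and strong divisibility as closed conditions, and to establish zero-dimensionality of the resulting parameter space via a tangent-space computation — which, using the Ogus–Vologodsky–Lan–Sheng–Zuo inverse Cartier transform to pin down $\mathrm{gr}_{\Fil_1}(V_1,\nabla_1)$ as a unique stable Higgs bundle, reduces to vanishing of an appropriate $\mathrm{Hom}$ group controlled by the irreducibility hypothesis.
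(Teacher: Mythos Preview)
Your overall strategy matches the paper's: reduce via the Fontaine--Faltings correspondence to counting mod-$p$ representations of a topologically finitely generated profinite group into the finite group $\GL_r(\mathbb F_{p^f})$, then control the fibers of the map $\Fil_1 \mapsto [\bar\rho_{\Fil_1}]$ by bounding an automorphism group. Two specific points need correction.

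First, the target of Faltings' functor $\mathbb D$ is a crystalline representation of $\pi_1^{\text{\'et}}(U_K)$, not of $\pi_1^{\text{\'et}}(U_1)$. This is good news for you: $\pi_1^{\text{\'et}}(U_K)$ sits in an extension of $\Gal(\bar K/K)$ by $\pi_1^{\text{\'et}}(U_{\bar K})$, both of which are topologically finitely generated, so $\pi_1^{\text{\'et}}(U_K)$ is as well. The tameness detour is therefore unnecessary, and the obstacle you flagged evaporates.

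Second, your Schur argument does not show that the relevant mod-$p$ automorphism group is contained in $\mathbb F_{p^f}^\times\cdot\id$. Irreducibility of the $F$-isocrystal $(\mathcal V,\nabla,\varphi,\iota)$ constrains endomorphisms of the integral $F$-crystal, but says nothing directly about its mod-$p$ reduction, which may well be reducible with a larger endomorphism ring. Moreover, the $\varphi_1$ in the Fontaine--Faltings quintuple depends on $\Fil_1$, so there is no single $\varphi_1$ whose automorphism group you can invoke here. The paper sidesteps both issues: if infinitely many distinct $\Fil_1^{(i)}$ yielded isomorphic Fontaine--Faltings modules, the resulting isomorphisms would in particular be distinct elements of $\Aut(V_1)$ (automorphisms of the underlying vector bundle), but $\Aut(V_1)\subset H^0(X_1,\mathcal{E}nd(V_1))$ is finite because $X_1$ is projective over a finite field. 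Note that the paper's proof does not actually use the irreducibility hypothesis on $(\mathcal V,\nabla,\varphi,\iota)$ at all, so your fallback parameter-space argument, while plausible, is working harder than necessary.
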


\begin{proof}
By the Fontaine-Lafaille-Faltings correspondence \cite[Theorem 2.6*(i)]{Fal89}, the category of $p$-torsion logarithmic Fontaine-Faltings modules (with endomorphism structure) on $(X_1,S_1)$ is equivalent to the category of logarithmic crystalline representations of $\pi^{\text{\'et}}_1(U_K)$ with coefficients in $\mathbb{F}_{p^f}$.
The \'etale fundamental group $\pi^{\text{\'et}}_1(U_K)$ is topologically finitely generated. Therefore the set of isomorphism classes of $\GL_r(\mathbb F_{p^f})$ representations of $\pi^{\text{\'et}}_1(U_K)$ is finite. In particular, there are only finitely many isomorphism classes of crystalline $\GL_r(\mathbb F_{p^f})$ representations. Forgetting the $\varphi$-structure, it follows that the set of isomorphism classes of de Rham bundles (with endomorphism structure) which underlie a Fontaine-Faltings module (with endomorphism structure) over $(X_1,S_1)$ is also finite.
	
Suppose there are infinitely many distinct Hodge filtrations $\Fil_1^{(i)}$ ($i=1,2,\cdots$) on $(V,\nabla,\iota)_1$ such that for each $i$, there exists $\varphi^{(i)}$ rendering the quintuple $(V,\nabla,\Fil,\varphi,\iota)_1$ a Fontaine-Faltings module. By the pigeonhole principle, there are infinitely $i$ such that there exists a log Fontaine-Faltings module  $(V,\nabla,\Fil^{(i)},\varphi^{(i)},\iota)_1$ whose isomorphism class is independent of $i$. In particular, one deduces $\mathrm{Aut}(V_1)$ is a infinite set. But this contradicts the finiteness of $\mathrm{Aut}(M)$ for any vector bundle $M$ over $X_1$, as our base field is finite.

\end{proof}

\begin{lem}\label{uniquenessliftFil}
Notation as in \ref{setup}. Let $(\mathcal{V},\nabla,\varphi,\iota)$ be an irreducible object of $\FIsoc^{\nilp}_{\log}(X_1,S_1)_{\mathbb Q_{p^f}}$. Let $(V,\nabla,\varphi,\iota)$ be an extension $(\mathcal{V},\nabla,\varphi,\iota)$. Let $\Fil_1$ be a Hodge filtration on $(V,\nabla,\iota)_1 := (V,\nabla,\iota)\pmod{p}$ with $\Fil_1^0V_1=V_1$ and $\Fil_1^pV_1=0$ such that there exists $\varphi_1$ rendering the quintuple $(V,\nabla,\Fil,\varphi,\iota)_1$ a logarithmic Fontaine-Faltings module with endomorphism structure over $X_1$. Assume there exists two liftings $\Fil$ and $\Fil'$ of the Hodge filtration $\Fil_1$. 

Then there exists an automorphism $f\colon (V,\nabla,\iota)\rightarrow (V,\nabla,\iota)$ such that 
	\[\Fil = f^*(\Fil').\]  
In other words, one has an isomorphism $f\colon (V,\nabla,\Fil,\iota)\rightarrow (V,\nabla,\Fil',\iota)$

\end{lem}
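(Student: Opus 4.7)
The plan is to construct $f$ as the inverse limit of a sequence $(f_n)_{n\geq 1}$ of automorphisms of $(V,\nabla,\iota)\bmod p^n$ satisfying $f_n^*\Fil'\equiv\Fil\pmod{p^n}$. Since $\Fil$ and $\Fil'$ both reduce to $\Fil_1$ modulo $p$, the base case $f_1=\mathrm{id}_V$ works. For the inductive step, after replacing $\Fil'$ by $f_n^*\Fil'$, I may assume $\Fil\equiv\Fil'\pmod{p^n}$ and seek $g=1+p^n h$ in $\Aut((V,\nabla,\iota)\bmod p^{n+1})$ with $g^*\Fil'\equiv\Fil\pmod{p^{n+1}}$. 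Compatibility of $g$ with $\nabla$ and $\iota$ is equivalent to $h$ being a global section of $\mEnd(V,\nabla,\iota)_1$, while the action of $g$ on $\Fil'$ shifts it by the image of $h$ in a tangent sheaf $\mathcal T$ of Griffiths-transverse, $\iota$-compatible filtration deformations at $\Fil_1$, identified with
\[\mathcal T=\ker\Bigl(\bar\nabla\colon \mEnd^{<0}_\iota(V,\Fil_1)_1\longrightarrow \mEnd^{\leq 0}_\iota(V,\Fil_1)_1\otimes\Omega^1_{X_1}(\log S_1)\Bigr).\]

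The difference $(\Fil-\Fil')\bmod p^{n+1}$ records a class $\Delta\in H^0(X_1,\mathcal T)$, and the inductive step reduces to producing $h\in H^0(X_1,\mEnd(V,\nabla,\iota)_1)$ mapping to $\Delta$, i.e., to surjectivity of this $H^0$-level map at the specific element $\Delta$. To establish this I would exploit the Fontaine-Faltings structure at $\Fil_1$: the Frobenius $\varphi_1$ realizes an inverse Cartier transform which, via the periodic Higgs-de Rham flow it generates, converts deformations of the Hodge filtration into deformations of the associated graded Higgs bundle and, after one step of the flow, into deformations of $(V,\nabla,\iota)_1$ as a flat bundle. Combined with the $\iota$-isotypic decomposition of $V_1$ and the simplicity of the identity eigenspace $(V,\nabla)^{(0)}_{\mathbb Q_p^{\mathrm{ur}}}$ established in Lemma \ref{absolutely_irreducible}, which forces $\mEnd$ on the identity component to be essentially scalar and thus matches the size of the relevant filtration deformations, this should produce the required $h$.

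The main obstacle is converting this deformation-theoretic picture into a precise cohomological surjectivity statement: the obstruction naturally lives in the $H^1$ of a two-term filtered log de Rham complex on the curve $X_1$, and the key point is that this obstruction vanishes on $\Delta$ because $\Delta$ arises from a genuine pair of integral lifts $(\Fil,\Fil')$, so the relevant integral lifting obstruction is trivially killed. A secondary subtlety is the transition from the mod $p^{n+1}$ constructions of $g$ to a globally defined automorphism over $W$; this follows from the formal smoothness of the automorphism group scheme of $(V,\nabla,\iota)$, which ensures the inverse system $(f_n)$ can be assembled into a single $f$ by passing to the limit.
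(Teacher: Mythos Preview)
Your deformation-theoretic framework is the right skeleton, and indeed the paper's proof ultimately rests on exactly this kind of argument---but the paper outsources it entirely, citing \cite[Theorem~1.6(2)]{KYZ} after first invoking $E_1$-degeneration of the Hodge--de~Rham spectral sequence for $(V,\nabla,\Fil,\varphi,\iota)_1$ via \cite[Lemma~6.1]{KYZ}. That $E_1$-degeneration is the genuine engine: it is what converts the naive obstruction calculus into an actual surjectivity (or rigidity) statement at each level. You never invoke it, and without it the inductive step does not close.

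There are two concrete gaps. First, your appeal to Lemma~\ref{absolutely_irreducible} is misplaced: that lemma assumes a geometrically absolutely irreducible crystalline representation, whereas the present statement only assumes irreducibility of the ambient $F$-isocrystal, and there is no representation in sight. Worse, if the conclusion of Lemma~\ref{absolutely_irreducible} did apply and each eigen-piece $(V,\nabla)^{(i)}$ were simple, then $H^0(\mEnd(V,\nabla,\iota)_1)$ would consist only of scalars on each component, and scalars act trivially on filtrations---so the image of your map $H^0(\mEnd(V,\nabla,\iota)_1)\to H^0(\mathcal T)$ would be zero. In that regime you would need to prove $\Delta=0$ outright, not merely that $\Delta$ lies in the image. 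Second, your proposed mechanism for killing the obstruction---``$\Delta$ arises from a genuine pair of integral lifts, so the lifting obstruction is trivially killed''---is circular. The existence of $\Fil$ and $\Fil'$ as integral lifts says nothing about whether an \emph{automorphism} carries one to the other; that is precisely what must be proved. The correct input here is the $E_1$-degeneration for $\mEnd(V)_1$ (a consequence of the Fontaine--Faltings structure mod $p$), which forces the relevant graded pieces of $H^*_{\mathrm{dR}}(\mEnd)$ to have the expected dimensions and thereby yields the needed surjectivity at each step.
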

\begin{proof} The Hodge-de Rham spectral sequence associated to $(V,\nabla,\Fil,\varphi,\iota)_1$ degenerates at $E_1$ \cite[Lemma 6.1]{KYZ}. Then this follows from \cite[Theorem 1.6(2)]{KYZ}\footnote{While  \cite[Theorem 1.6(2)]{KYZ} is written for vector bundles with a (logarithmic) flat connection, it easily generalizes to the case with endomorphism structure}.
\end{proof}

\section{The Proof}
The proof of the main theorem of this article is diagrammatically sketched below; the definition of the various terms will follow. Here is a two sentence summary of the proof. The Langlands correspondence implies that $\tMFIsoc$ is finite. By following the diagram, it follows that $\RepGeo$ is finite.
\begin{equation*}
\xymatrix@C=2cm@R=1cm{
	\Rep 
	\ar@{<-}[r]^-{\mathbb D}_-{1:1}
	\ar@{->>}[ddd]|(0.15){\infty:1}^{\text{restriction}}
	&\MF
	\ar@{->>}[r]
	\ar@{->>}[dd]|(0.2){\infty:1} 
	\ar@{->>}@/^30pt/[rr]^{\text{Lemma~\ref{irrF-Isoc}}} 
	& \MFphi  
	\ar@{->>}[d]|(0.4){\infty:1}
	\ar@{->>}[r]^{\text{Lemma~\ref{finitenessExtension}}}|{n:1} 
	&\MFIsoc \ar@{->>}[d]|(0.4){\infty:1} \\
	&& \tMFphi 
	\ar@{->>}[r]|{n:1}^{\text{Lemma~\ref{InclusionExtensions}}} 
	\ar@{->>}[d]^{\text{Remark~\ref{InvariantUnderlyingdRBundle}}}		
	&  \tMFIsoc \\ 
	&\MFFil  
	\ar@{->>}[d]^{\text{Gr}} \ar@{->>}[r]^-{\text{Lemma~\ref{finiteHodgeFil},\ref{uniquenessliftFil}}}|-{n:1} 
	& \MFnabla\\
\RepGeo  \ar@{<<-}[r]^-{\text{Faltings' $p$-adic}}_-{\text{Simpson corr.}} & \MFtheta\\
}
\end{equation*} 
We explain all of the terms in the above diagram.
\begin{itemize}
		\item $\Gamma=\pi_1^{\text{\'et}}(U_K,x)$ and $\bar \Gamma=\pi_1^{\text{\'et}}(U_{\bar K},x)$.
		\item $\Rep$ is the set of isomorphism classes of logarithmic crystalline representations $\rho\colon\Gamma\rightarrow \mathrm{GL}_r(\mathbb Z_{p^f})$ whose Hodge Tate weights are located in $[0,p-1]$ such that $\rho_{\mathbb{Q}}\colon \Gamma\rightarrow \mathrm{GL}_r(\mathbb{Q}_{p^f})$ is geometrically absolutely irreducible. 
		\item  $\RepGeo$ is the set of isomorphism classes of representations of $\bar \Gamma$ that come from $\Rep$ under the restriction map induced by the natural embedding map $\bar \Gamma \hookrightarrow \Gamma$. Thus one has a surjective map
			\[ \Rep \twoheadrightarrow \RepGeo.\]
		\item $\MF$ is the set of isomorphism classes of logarithmic Fontaine-Faltings module $(V,\nabla, \Fil,\varphi)$ with endomorphism structure $\iota\colon \mathbb{Z}_{p^f}\hookrightarrow \text{End}(V,\nabla, \Fil,\varphi)$ associated to representations in $\Rep$ via \cite{Fal89}. Thus one has an bijection
			\[ \Rep \xrightarrow{1:1} \MF.\]
		\item $\MFFil$ is the image of $\MF$ under the map that sends an isomorphism class of a logarithmic Fontaine-Faltings module $[(V,\nabla,\Fil,\varphi,\iota)]$ to the isomorphism class of the quadruple $(V,\nabla,\Fil,\iota)$ in the additive category of filtered de Rham bundles equipped with a $\mathbb{Z}_{p^f}$-endomorphism structure.  Thus one has a surjective map
			\[ \MF\twoheadrightarrow \MFFil.\]
		\item $\MFphi$ is the image of $\MF$ under the map that sends an isomorphism class of a logarithmic Fontaine-Faltings module to the isomorphism class of the underlying logarithmic $F$-crystal in locally free modules with endomorphism structure: $[(V,\nabla,\Fil,\varphi,\iota)]\mapsto[(V,\nabla,\varphi,\iota)]$. Thus one has a surjective map
			\[ \MF\twoheadrightarrow \MFphi.\]
		
		\item $\tMFphi$ is the set of equivalence classes of the set $\MFphi$ modulo the equivalence relations defined by twisting a by constant rank 1 Fontaine-Faltings modules (with endomorphism structure).
		\item $\MFnabla$ is the image of $\MFphi$ under the map that sends an isomorphism class of a logarithmic $F$-crystal to the isomorphism class of the underlying logarithmic de Rham {\it bundle} with endomorphism structure: $[(V,\nabla,\varphi,\iota)]\mapsto[(V,\nabla,\iota)]$. Thus one has surjective maps
		\[ \MFFil\twoheadrightarrow \MFnabla \twoheadleftarrow \MFphi.\]
		By Remark~\ref{InvariantUnderlyingdRBundle} the second surjective map factors through $\tMFphi$.
		
		\item  $\MFtheta$ is the image of $\MFFil$ under the map that sends an isomorphism class of filtered de Rham bundle with endomorphism structure to the isomorphism class of the associated Higgs bundle with endomorphism structure: $[(V,\nabla,\Fil,\iota)] \mapsto [\mathrm{Gr}(V,\nabla,\Fil,\iota)]$.   Thus one has a surjective map
			\[ \MFFil\twoheadrightarrow \MFtheta.\] 
		\item $\MFIsoc$ is the image of $\MF$ under the map that sends an isomorphism classs of a logarithmic Fontaine-Faltings module to the isomorphism class of the associated overconvergent $F$-isocrystal and multiplication by $\mathbb{Q}_{p^f}$, i.e., the isomorphism class of an object of $\FIsoc^{\dagger}(U_1)_{\mathbb Q_{p^f}}$.

\item $\tMFIsoc$ is the set of equivalence classes of the set $\MFIsoc$ modulo the equivalence relations defined by twisting a constant rank-1 $F$-isocrystal.

\end{itemize}
Every constant rank 1 $F$-isocrystal comes from a constant rank 1 Fontaine-Faltings module. Therefore the natural map $\tMFphi\rightarrow \tMFIsoc$ is surjective.

	If the reader is uncomfortable with carrying around the endomorphism structure $\iota$, we introduce the following notation: if   $(V,\nabla,\Fil,\varphi,\iota)$ is a logarithmic Fontaine-Faltings modules, then $(V,\nabla,\Fil)^{(0)}$ denotes the identity eigenspace of the action of $\iota$ on $(V,\nabla,\Fil)$, i.e., for any $v\in V$,
	\[v\in V^{(0)} \quad  \text{ iff }  \quad  \iota(r)v = rv \text{ for all } r\in \mathbb Z_{p^f}.\]
	One may replace all objects above with the identity eigenspaces of $\iota$ (together with $\varphi^f$, if $\varphi$ shows up). This will yield equivalent categories because $\mathbb{F}_{p^f}\subset k$ and hence $\mathbb{Z}_{p^f}\subset W(k)$.
	
	We have one final preliminary result, using the above notation.

\begin{lem}\label{InclusionExtensions} The map $$\tMFphi\twoheadrightarrow\tMFIsoc$$ is finite-to-one.
\end{lem}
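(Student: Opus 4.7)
The plan is to reduce the finite-to-one assertion to a single application of Lemma~\ref{finitenessExtension} at a chosen representative of the given class in $\tMFIsoc$. Fix a class $[\mathcal V]\in\tMFIsoc$ and a representative $\mathcal V\in\MFIsoc$. I will first show that every element of $\tMFphi$ lying over $[\mathcal V]$ admits a representative whose image in $\MFIsoc$ is literally $\mathcal V$. Given a preimage $[(V,\nabla,\varphi,\iota)]$, the isocrystal $(V,\nabla,\varphi,\iota)_{\mathbb Q}$ differs from $\mathcal V$ by tensoring with some constant rank-$1$ $F$-isocrystal $\mathcal L$. Since every constant rank-$1$ $F$-isocrystal lifts to a constant rank-$1$ Fontaine-Faltings module (as stated in the paragraph preceding this lemma), we may pick a lift $L$ of $\mathcal L$ and replace $(V,\nabla,\varphi,\iota)$ by $(V,\nabla,\varphi,\iota)\otimes L^{-1}$; this preserves the class in $\tMFphi$ and arranges the image in $\MFIsoc$ to be $\mathcal V$. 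It therefore suffices to bound the fiber of the untwisted map $\MFphi\to\MFIsoc$ over $\mathcal V$.

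Next, I would verify the two hypotheses needed to apply Lemma~\ref{finitenessExtension} to $\mathcal V$. By the definition of $\MFIsoc$, the object $\mathcal V$ arises from the Fontaine-Faltings module attached to some $\rho\in\Rep$, so $\rho_{\mathbb Q}$ is geometrically (hence absolutely) irreducible. Lemma~\ref{irrF-Isoc} then yields irreducibility of $\mathcal V$ in $\FIsoc^{\dagger}(U_1)_{\mathbb Q_{p^f}}$, and Kedlaya's equivalence \cite[Theorem 6.4.5]{kedlayasemistableI} transfers this to irreducibility in $\FIsoc^{\nilp}_{\log}(X_1,S_1)_{\mathbb Q_{p^f}}$, as required. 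Moreover, every element $(V,\nabla,\varphi,\iota)\in\MFphi$ mapping to $\mathcal V$ is by construction the $F$-crystal underlying some Fontaine-Faltings module. A standard semistability property of the Fontaine-Faltings correspondence (compare the degree-zero semistability of $(V,\nabla)^{(0)}_{k'}$ used in the proof of Lemma~\ref{absolutely_irreducible}, together with \cite[Section 6]{KYZlefschetz}) guarantees that the mod-$p$ logarithmic de Rham bundle $(V,\nabla)_1$ on $(X_1,S_1)$ is semistable of degree zero. Hence any such $(V,\nabla,\varphi,\iota)$ is a \emph{semistable} extension of $\mathcal V$ in the sense of the paper.

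Combining these two steps, the fiber of $\MFphi\to\MFIsoc$ over $\mathcal V$ injects into the set of isomorphism classes of semistable extensions of the irreducible object $\mathcal V$, which is finite by Lemma~\ref{finitenessExtension}. Via the reduction in the first paragraph, this finiteness passes to the fiber of $\tMFphi\to\tMFIsoc$ over $[\mathcal V]$. The use of Lemma~\ref{finitenessExtension} requires the curve hypothesis, which is available here thanks to the Lefschetz reduction of Lemma~\ref{lemma:lefschetz}. The step I expect to require the most care is the twist bookkeeping in the first paragraph: one must verify that the rank-$1$ equivalence relation defining $\tMFphi$ really is compatible, under the forgetful functor, with the rank-$1$ equivalence relation defining $\tMFIsoc$, so that the adjustment of a representative by $L^{-1}$ genuinely keeps its class in $\tMFphi$ fixed. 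Once this matching is untangled, the remaining arguments are just assembly of previously established lemmas.
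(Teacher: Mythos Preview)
Your reduction in the first paragraph does not work, and this is not merely bookkeeping. You want to take a lift $L$ of $\mathcal L$ and replace $(V,\nabla,\varphi,\iota)$ by $(V,\nabla,\varphi,\iota)\otimes L^{-1}$, claiming this stays in $\MFphi$. But membership in $\MFphi$ means the $F$-crystal underlies a Fontaine--Faltings module with Hodge--Tate weights in $[0,p-1]$; after an arbitrary rank-$1$ twist the natural filtration has weights shifted outside this window, and there is no reason another admissible filtration exists. Concretely, if $(V,\nabla,\Fil,\varphi,\iota)$ has a weight equal to $p-1$ and $L$ has a positive weight, then $(V,\nabla,\varphi,\iota)\otimes L^{-1}$ cannot satisfy strong divisibility for any filtration with weights in $[0,p-1]$. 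So the fiber over $[\mathcal V]$ in $\tMFphi$ need not be represented by extensions of $\mathcal V$ itself, and the single application of Lemma~\ref{finitenessExtension} you propose does not bound it.

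The paper's proof confronts exactly this obstruction. It parametrizes the twists of $\mathcal E=(\mathcal V,\nabla,\varphi,\iota)$ by $\lambda\in (K\otimes\mathbb Q_{p^f})^\times$, sets $\mathcal T_\lambda=\{T\in\MFphi: T_{\mathbb Q}\simeq\mathcal E_\lambda\}$, and proves two substantive claims: (1) a valuation bound $|v_p(\lambda)|\le p-1$ whenever $\mathcal T_\lambda\neq\emptyset$, obtained from strong divisibility applied to $\varphi_{f-1}\circ\cdots\circ\varphi_0$; and (2) if $v_p(\lambda)=v_p(\lambda')$ then $\mathcal T_\lambda$ and $\mathcal T_{\lambda'}$ have the same image in $\tMFphi$, via an explicit lattice rescaling $\oplus_i V_i\mapsto\oplus_i p^{m_i}V_i$ followed by a unit-Frobenius twist. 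Only then does Lemma~\ref{finitenessExtension} finish the argument, applied once for each of the finitely many values of $v_p(\lambda)\in\frac{1}{f}\mathbb Z\cap[1-p,p-1]$. Your proposal is missing both of these ingredients; the valuation bound in particular is a genuine new input, not assembly of prior lemmas.
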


\begin{proof} Fix an object $\mathcal E=(\mathcal V,\nabla,\varphi,\iota)$ in $\MFIsoc$. Recall that any twisting of $\mathcal E$ by a constant rank-1 $F$-isocrystal is of the form  $\mathcal E_\lambda = (\mathcal V,\nabla,\lambda\varphi,\iota)$ for some $\lambda\in (K\otimes \mathbb Q_{p^f})^\times$. Let $\mathcal V=\bigoplus_{i=0}^{f-1} \mathcal V_i$ be the eigen decomposition of $\iota\colon \mathbb{Q}_{p}\rightarrow \text{End}(\mathcal V,\nabla,\varphi)$. Then the semi-linear map $\varphi$ can be decomposed as semi-linear maps 
\[\varphi_i\colon \mathcal V_i \rightarrow \mathcal V_{i+1} \text{ for } i=0,\cdots,f-2 \quad \text{and} \quad \varphi_{f-1} \colon \mathcal V_{f-1} \rightarrow \mathcal V_{0}.\]
Analogously, for any $\lambda=(\lambda_i) \in (K\otimes\mathbb Q_{p^f})^\times =\prod_{i=0}^{f-1}K^\times$, the isogeny $\lambda\cdot \varphi$ decomposes as 
\[\lambda_i\varphi_i\colon \mathcal V_i \rightarrow \mathcal V_{i+1} \text{ for } i=0,\cdots,f-2 \quad \text{and} \quad \lambda_{f-1}\varphi_{f-1} \colon \mathcal V_{f-1} \rightarrow \mathcal V_{0}.\]
We denote 
\[v_p(\lambda):= \frac1f\sum_{i=0}^{f-1}v_p(\lambda_i)  \in \frac1f\mathbb Z.\]
Set 
\[\mathcal T_\lambda :=\left\{   T\in \MFphi \mid T\otimes_{\mathbb Z_p}\mathbb Q_p \simeq \mathcal E_\lambda\right\},\]
i.e., the set of $F$-crystals in finite, locally free modules which underlie $(\mathcal{V},\nabla,\varphi,\iota)$ up to isomorphism. This is a finite set by Lemma~\ref{finitenessExtension} for the following reason: if $(V,\nabla,\varphi,\iota)$ comes from a Fontaine-Faltings module, then $V$ is automatically semistable.\footnote{One way of seeing this is that a strict $p$-torsion Fontaine-Faltings module corresponds to a periodic Higgs-de Rham flow. The Higgs bundles in the flow are all semistable by \cite[Proposition 6.3]{LSZ13a}. Because $C^{-1}$ preserves semistability, this implies that $(V,\nabla)$ is semistable.} Under this notation, The lemma is claims that the following set 
\[\bigcup_\lambda \mathcal T_\lambda \Big/\sim \]
is finite, where the equivalence relation ``$\sim$'' is given as follows: $(V,\nabla,\varphi,\iota)\sim (V',\nabla',\varphi',\iota')$ if and only if they are twists by a constant, rank 1 Fontaine-Faltings module (with endomorphism structure). The finiteness will then follow from the following two claims:
\begin{itemize}
\item[Claim 1.]  $\mathcal T_\lambda = \emptyset$, if $v_p(\lambda)>p-1$ or $v_p(\lambda)<1-p$.
\item[Claim 2.] $\mathcal T_\lambda = \mathcal T_{\lambda'}$ in $\MFphi^{twist}$, if $v_p(\lambda)=v_p(\lambda')$.
\end{itemize} 

Let $(V,\nabla,\Fil,\varphi,\iota)\in \MF$ be an object mapping to $\mathcal E$. By strong divisibility, one has 
\[p^{p-1} V_{0} \subset \langle\varphi_{f-1}(V_{f-1})\rangle \subset V_{0} \quad \text{ and } \quad p^{p-1} V_{i+1} \subset \langle\varphi_i(V_i)\rangle \subset V_{i+1} \quad \text{ for } i=0,1\cdots,f-2.\]
Considering the composition, one gets
\[p^{(p-1)f}V_0 \subset \langle \varphi_{f-1}\circ\cdots\circ\varphi_{0}(V_0) \rangle 
\subset V_0.\]
Choose a basis of $V_0$ and write $\varphi_{f-1}\circ\cdots\circ\varphi_{0}$ in terms of this basis. Then the $p$-adic valuation of the determinant of this matrix is well-defined; one has
\begin{equation}\label{equ1}
0\leq v_p\Big(\det (\varphi_{f-1}\circ\cdots\circ\varphi_{0})\Big) \leq (p-1)f\cdot \rank(V_0).
\end{equation}
Suppose $\mathcal T_\lambda\neq \emptyset$ for some $\lambda=(\lambda_i)  \in (K\otimes\mathbb Q_{p^f})^\times =\prod_{i=0}^{f-1}K^\times$. Then by precisely analagous reasoning, one has
\begin{equation}\label{equ2}
0\leq v_p\Big(\det ((\lambda_{f-1}\varphi_{f-1})\circ\cdots\circ(\lambda_0\varphi_{0}))\Big) \leq (p-1)f\cdot \rank(V_0)
\end{equation}
Since $v_p\Big(\det ((\lambda_{f-1}\varphi_{f-1})\circ\cdots\circ(\lambda_0\varphi_{0}))\Big) = v_p(\lambda)\cdot f\cdot \rank(V_0) + v_p\Big(\det (\varphi_{f-1}\circ\cdots\circ\varphi_{0})\Big)$, by (\ref{equ1}) and (\ref{equ2}), one has 
\[1-p\leq v_p(\lambda) \leq p-1.\]
Thus the Claim 1 follows.

We now show Claim 2. By replacing $\varphi$ with $\lambda'\varphi$, one may reduce the claim to case $\lambda'=1$; in this setting, as $v_p(\lambda)=v_p(\lambda')$, it follows that $v_p(\lambda)=0$. Denote $n_i = v_p(\lambda_i)$, $c_i = \lambda_i/p^{n_i}\in W^\times$ and $m_{i} = n_1 + n_2 +\cdots + n_{i-1}$ for all $i=0,1,\cdots,f-1$. Consider the following map
\[F_{1,\lambda}\colon \mathcal T_1\rightarrow \mathcal T_\lambda\]
which maps $(\oplus_iV_i, \oplus_i\nabla_i,\oplus_i \varphi_i,\iota)$ to $(\oplus_i p^{m_i}V_i, \oplus_i\nabla_i,\oplus_i\lambda_i\varphi_i,\iota)$. The Claim 2 is reduced to showing that this map
\begin{itemize}
\item[(1)] is well-defined;
\item[(2)] is bijective and 
\item[(3)] preserves the twisted classes.
\end{itemize}

We first show it is well-defined. Suppose $(\lambda_i)$ are $K^{\times}$ with $\sum_{i=0}{f-1}v_p(\lambda_i)=0$. Set $n_i$, $c_i$, and $m_i$ as above. Suppose $\Fil_i$ is a Hodge filtration on $(V_i,\nabla_i)$ such that $(\oplus_iV_i, \oplus_i\nabla_i,\oplus_i\Fil_i,\oplus_i \varphi_i,\iota)$ forms an object in $\MF$. Then $(\oplus_i p^{m_i}V_i, \oplus_i\nabla_i,\oplus_i\Fil_i,\oplus_i\lambda_i\varphi_i,\iota)$ is also contained in $\MF$, because the pair $(\oplus_i\Fil_i,\oplus_i\lambda_i\varphi_i)$ also satisfies strong divisibility on $\oplus_i p^{m_i}V_i$:
\[(\lambda_i\varphi_i)(p^{m_i}\widetilde{V_i}) = \lambda_ip^{m_i} \varphi_i(\widetilde{V_i}) = \lambda_ip^{m_i}V_{i+1} = p^{m_{i+1}}V_{i+1}\]
and
\[(\lambda_{f-1}\varphi_{f-1})(p^{m_{f-1}}\widetilde{V_{f-1}}) = \lambda_{f-1}p^{m_{f-1}} \varphi_{f-1}(\widetilde{V_{f-1}}) = \lambda_{f-1}p^{m_{f-1}}V_{0} = p^{m_0}V_{0},\]
in the last equality we used the fact that $n_{f-1}+m_{f-1}=f\cdot v_p(\lambda)=0=m_0$. Thus $F_{1,\lambda}$ is well-defined. The map $F_{1,\lambda}$ is bijective, because one can define its inverse map $\mathcal T_\lambda\rightarrow \mathcal T_1$ in similar manner by sending $(\oplus_iV_i, \oplus_i\nabla_i,\oplus_i \varphi_i,\iota)$ to $(\oplus_i p^{-m_i}V_i, \oplus_i\nabla_i,\oplus_i\lambda_i^{-1}\varphi_i,\iota)$. 

Now we only need to show  $(\oplus_iV_i, \oplus_i\nabla_i,\oplus_i \varphi_i,\iota)$ and $(\oplus_i p^{m_i}V_i, \oplus_i\nabla_i,\oplus_i\lambda_i\varphi_i,\iota)$ differ by twisting a rank 1 Fontaine-Faltings module. By the commutativity of following diagram
\begin{equation*}
\xymatrix{
V_0 \ar[d]^{p^{m_0}\cdot\mathrm{id}} \ar@{_(->}[r] & \widetilde{V_0} \ar[d]^{p^{m_0}\cdot\mathrm{id}} \ar[r]^{\simeq}_{\varphi_0}
& 
V_1 \ar[d]^{p^{m_1}\cdot\mathrm{id}} \ar@{_(->}[r]  & \widetilde{V_1} \ar[d]^{p^{m_1}\cdot\mathrm{id}} \ar[r]^{\simeq}_{\varphi_1}
& 
V_2 \ar@{..}[r] \ar[d]^{p^{m_2}\cdot\mathrm{id}}
& 
V_{f-1} \ar[d]^{p^{m_{f-1}}\cdot\mathrm{id}}  \ar@{_(->}[r]  & \widetilde{V_{f-1}} \ar[d]^{p^{m_{f-1}}\cdot\mathrm{id}} \ar@/_30pt/[llllll]^{\simeq}_{\varphi_{f-1}}\\
p^{m_0}V_0 \ar@{^(->}[r] & p^{m_0}\widetilde{V_0} \ar[r]^{\simeq}_{p^{n_0}\cdot\varphi_0}
& 
p^{m_1}V_1 \ar@{^(->}[r]  & p^{m_1}\widetilde{V_1} \ar[r]^{\simeq}_{p^{n_1}\cdot\varphi_1}
& 
p^{m_2}V_2 \ar@{..}[r]
& 
p^{m_{f-1}}V_{f-1} \ar@{^(->}[r]  & p^{m_{f-1}}\widetilde{V_{f-1}} \ar@/^30pt/[llllll]^{\simeq}_{p^{n_{f-1}}\cdot\varphi_{f-1}}\\
}
\end{equation*}
one gets an isomorphism $\oplus_i(p^{m_i}\mathrm{id}) \colon (\oplus_iV_i, \oplus_i\nabla_i,\oplus_i \varphi_i,\iota) \rightarrow (\oplus_i p^{m_i}V_i, \oplus_i\nabla_i,\oplus_ip^{n_i}\varphi_i,\iota)$. 
We consider the rank-1 Fontaine-Faltings module with endomorphism structure
\[\mathcal L =\Big(\bigoplus_{i=0}^f W\cdot e_i,\Fil_{tri}, \phi,\iota\Big)\]
where $\phi(\sum_i a_ie_i) = c_0a_0^\sigma e_1 + \cdots  + c_{f-2}a_{f-2}^\sigma e_{f-1}+ c_{f-1}a_{f-1}^\sigma e_0$. Since $v_p(c_i)=0$ for all $i=0,1,\cdots,f-1$, this is a well-defined constant Fontaine-Faltings module. Then consider the twisting of $(\oplus_i p^{m_i}V_i, \oplus_i\nabla_i,\oplus_ip^{n_i}\varphi_i,\iota)$ by $\mathcal L$, which is nothing just equal to $(\oplus_i p^{m_i}V_i, \oplus_i\nabla_i,\oplus_i\lambda_i\varphi_i,\iota)$.
Thus  $(\oplus_iV_i, \oplus_i\nabla_i,\oplus_i \varphi_i,\iota)$ and $(\oplus_i p^{m_i}V_i, \oplus_i\nabla_i,\oplus_i\lambda_i\varphi_i,\iota)$ are differed by twisting a rank-1 Fontaine-Faltings module.
\end{proof}

\begin{proof}[Proof of Theorem~\ref{mainThm}]

By Faltings' definition of a crytalline representation, the Hodge-Tate weights are in an interval of length $p-1$. Since Tate twisting of a crystalline representation $\rho$ does not change the isomorphism class of $\rho\mid_{\bar{\Gamma}}$, it suffices to prove the finiteness of set 
\[\left.\left\{\rho\colon \pi^{et}_1(U_{K}) \rightarrow \mathrm{GL}_r(\mathbb Z_{p^f}) \left| \begin{array}{l}
			\rho \text{ is log crystalline }\\
			\text{with HT weights in }[0,p-1] \text{ and } \\
			\rho^{\text{geo}}\colon \pi_1^{\text{\'et}}(U_{\bar K})\rightarrow \mathrm{GL}_r(\mathbb{Q}_{p^f}) \text{ absolutely irreducible.}\\
			\end{array}  \right.\right\}
\right/\rho_1\sim\rho_2\text{ if }\rho_1|_{\pi_1^{\text{\'et}}(U_{\bar K})}\cong \rho_2|_{\pi_1^{\text{\'et}}(U_{\bar K})}\]

Equivalently, to prove Theorem~\ref{mainThm} we will show that $\RepGeo$ is finite.

Since the Faltings $p$-adic Simpson's correspondence~\cite{Fal05} is compatible with his $\mathbb D$-functor~\cite{Fal89}, one has following commutative diagram of surjective maps between sets
\begin{equation*}
\xymatrix@C=4cm{ 
\MF \ar@{->>}[r]^{(V,\nabla,\Fil,\varphi,\iota) \mapsto \mathrm{Gr}\big((V,\nabla,\Fil,\iota)\big)} \ar[d]_{\simeq}^{\mathbb D}		
& \MFtheta \ar[d]^{\text{Faltings' $p$-adic Simpson correspondence}} \\
\Rep \ar@{->>}[r]^{\text{restriction}} 
& \RepGeo \\
}
\end{equation*}
	
Since the two horizontal arrows and the left vertical arrow are surjective, the right vertical arrow is also surjective. One has a surjective composition
\[ \MFFil\twoheadrightarrow \MFtheta\twoheadrightarrow  \RepGeo.\]
To prove Theorem~\ref{mainThm}, we only need to show the finiteness of $\MFFil$.

Firstly, we claim that $\tMFIsoc$ is finite.  By Lemma~\ref{absolutely_irreducible}, all elements in $\MFIsoc$ are of absolutely irreducible. Then the set of equivalence classes of absolutely irreducible objects of $\FIsoc^{\dagger}(U_1)_{\mathbb Q_{p^f}}$ up to twisting by a constant rank 1 $F$-isocrystal is finite by \cite[Corollary 2.1.5]{kedlayacompanions}.
 
Secondly, we claim that $\tMFphi$ is finite. This follows from Lemma~\ref{InclusionExtensions}.

Finally, the map $\MFFil\twoheadrightarrow\MFnabla$ is finite-to-one by Lemma~\ref{finiteHodgeFil} and Lemma~\ref{uniquenessliftFil}. Thus $\MFFil$ is finite. 
\end{proof}

 \end{document}